\newcommand{\noun}[1]{\textsc{#1}}
\numberwithin{equation}{section}
\numberwithin{figure}{section}
  \theoremstyle{remark}
  \newtheorem*{rem*}{\protect\remarkname}
\theoremstyle{plain}
\newtheorem{thm}{\protect\theoremname}
  \theoremstyle{definition}
  \newtheorem{defn}[thm]{\protect\definitionname}
  \theoremstyle{remark}
  \newtheorem{rem}[thm]{\protect\remarkname}
  \theoremstyle{plain}
  \newtheorem{lem}[thm]{\protect\lemmaname}
  \theoremstyle{plain}
  \newtheorem*{thm*}{\protect\theoremname}
  \theoremstyle{plain}
  \newtheorem{cor}[thm]{\protect\corollaryname}
  \providecommand{\corollaryname}{Corollary}
  \providecommand{\definitionname}{Definition}
  \providecommand{\lemmaname}{Lemma}
  \providecommand{\remarkname}{Remark}
  \providecommand{\theoremname}{Theorem}
\providecommand{\theoremname}{Theorem}
\begin{document}

\title[Parabolic Equations of Second Order with Critical Drift]{Non-divergence Parabolic Equations of Second Order with Critical Drift in Morrey Spaces}

\date{05/27/15}

\author{Gong Chen}

\keywords{Second-order parabolic equations, Harnack inequality, measurable
coefficients.}

\email{gc@math.uchicago.edu}

\urladdr{http://www.math.uchicago.edu/\textasciitilde{}gc/}

\address{Department of Mathematics, The University of Chicago, 5734 South
University Avenue, Chicago, IL 60615, U.S.A}
\begin{abstract}
We consider uniformly parabolic equations and inequalities of second
order in the non-divergence form with drift 
\[-u_{t}+Lu=-u_{t}+\sum_{ij}a_{ij}D_{ij}u+\sum b_{i}D_{i}u=0\,(\geq0,\,\leq0)\] in some domain $\Omega\subset \mathbb{R}^{n+1}$.
 We prove a variant of Aleksandrov-Bakelman-Pucci-Krylov-Tso estimate with $L^{p}$ norm of the inhomogeneous term for some number $p<n+1$. Based
 on it, we derive the growth theorems and the interior Harnack inequality.  In this paper, we will only assume
 the drift $b$ is in certain Morrey spaces defined below which are critical under the parabolic scaling but not necessarily to be bounded. This is a continuation of the work in \cite{GC}.
\end{abstract}
\maketitle

\section{Introduction}

\subsection{General Introduction}

The qualitative properties of solutions to partial diff{}erential
equations have been intensively studied for a long time. Following
\cite{GC}, in this note, we continue our discussion on the qualitative
properties of solutions to the uniform parabolic equation of non-divergence
form with drift,
\begin{equation}
-u_{t}+Lu:=-u_{t}+\sum_{ij}a_{ij}D_{ij}u+\sum_{i}b_{i}D_{i}u=0\label{eq:01}
\end{equation}
and the associated inequalities: $-u_{t}+Lu\geq 0$ and $-u_{t}+Lu\leq 0$. Throughout the paper, we use the notations $D_{i}:=\frac{\partial}{\partial x_{i}},\, D_{ij}:=\frac{\partial^{2}}{\partial x_{i}\partial x_{j}}$
and $u_{t}:=\frac{\partial u}{\partial t}$. We assume $b=\left(b_{1},\ldots,b_{n}\right)$ and $a_{ij}$'s are real  measurable, $a_{ij}$'s also satisfy the {\em uniform parabolicity condition}
\begin{equation}
\forall\xi\in\mathbb{R}^{n},\,\nu^{-1}\left|\xi\right|^{2}\leq\sum_{i,j=1}^{n}a_{ij}(X)\xi_{i}\xi_{j},\,\,\,\,\,\,\,\sum_{i,j=1}^{n}a^{2}_{ij}\leq \nu^{2{}}\label{eq:02}
\end{equation}
with some constant $\nu\geq1$, 
$\forall X=(x,t)$ in the domain of definition $\Omega \subset \mathbb{R}^{n+1}$. 

For the drift $b$, we will only require it is in certain
Morrey spaces which are critical under the parabolic scaling. To formulate our setting more precisely, we define Morrey
spaces as following: given some constants $p,q\geq1$ and $\alpha\geq0$ satisfying
 \begin{equation}
\frac{n}{p}+\frac{2}{q}-\alpha=1,\label{eq:04}
\end{equation} 
on the domain of definition $\Omega$, we define
\begin{equation}
M_{p,q}^{\alpha}(\Omega):=\left\{ f\in L_{x}^{p}L_{t}^{q}(\Omega);\,||f||_{M_{p,q}^{\alpha}(\Omega)}:=\sup_{Q_{r}\subset\Omega,\,r>0}r^{-\alpha}||f||_{L_{x}^{p}L_{t}^{q}(Q_{r})}<\infty\right\} \label{eq:24}
\end{equation}
where  $Q_{r}$ is the standard parabolic cylinder defined in Definition \ref{cylinder} and
\[
\left\Vert f\right\Vert _{L_{x}^{p}L_{t}^{q}}:=\left(\intop\left[\int\left|f(x,t)\right|^{q}dt\right]^{\frac{p}{q}}dx\right)^{\frac{1}{p}}.
\]
We will focus on a particular case with $p=q=\frac{1}{\alpha}=n+1$, 
\begin{equation}
b\in M_{n+1,n+1}^{\frac{1}{n+1}}(\Omega)
\end{equation}
with 
\begin{equation}
\sup_{Q_{r}\subset\Omega,\,r>0}\left[\frac{1}{r}\intop_{Q_{r}}|b|^{n+1}dxdt\right]=: S(\Omega)<\infty.\label{eq:23}
\end{equation}
By "critical", we mean that with the $M_{n+1}^{\frac{1}{n+1}}(\Omega)$
norm, the drift is scaling invariant under the parabolic scaling: for
$k>0$,
\[
x\rightarrow k^{-1}x,\,\, t\rightarrow k^{-2}t.
\]
Indeed, suppose $u$ satisfies
\[
-u_{t}+\sum_{ij}a_{ij}D_{ij}u+\sum_{i}b_{i}D_{i}u=0.
\] in a domain $Q\in \mathbb{R}^{n+1}$. Then for any constant $k>0$, let 
\[\tilde{x}=k^{-1}x,\,\tilde{t}=k^{-2}t.\]
Then  $\widetilde{u}\left(\tilde{x},\tilde{t}\right)=u\left(r\tilde{x},r^{2}\tilde{t}\right)$
satisfies the equation
\[
-\widetilde{u}_{\tilde{t}}+\sum_{ij}\widetilde{a_{ij}}D_{ij}\widetilde{u}+\sum_{i}\tilde{b_{i}}D_{i}\widetilde{u}=0,
\]
in $Q_{k}:=\{(x,t),\,(kx,k^{2}t)\in Q \}$.
Note that $\tilde{b}=kb$, so
\[
S^{\frac{1}{n+1}}(\Omega_{k})=\left\Vert \tilde{b}\right\Vert _{M_{n+1,n+1}^{\frac{1}{n+1}}(\Omega_{k})}=\left\Vert b\right\Vert _{M_{n+1,n+1}^{\frac{1}{n+1}}(\Omega)}=S^{\frac{1}{n+1}}(\Omega).
\]

In general, regarding the
scaling, intuitively, there is a competition between the transport term
and the diffusion part. One might expect that for the supercritical scaling
case,  $\frac{n}{p}+\frac{2}{q}-\alpha>1$: the solutions of the equations
have discontinuities \cite{SVZ}. For the critical  situation we
are considering here, we have H\"older continuous solutions, see Theorem \ref{hoelder}. Finally,
if the drift is subcritical with respect to the scaling, i.e. $\frac{n}{p}+\frac{2}{q}-\alpha<1$,
we expect the solutions will be smooth. We should notice $L_{x,t}^{n+1}$ ($p=q=n+1,\,\alpha=0$)
is supercritical with respect to the parabolic scaling. We will discuss a concrete example in the appendix.

We will concentrate on the growth theorems and the interior Harnack inequality for parabolic equations in non-divergence form with critical drift. In order to derive them, we prove a variant of Aleksandrov-Bakelman-Pucci-Krylov-Tso estimate, Theorem \ref{variant}. This variant of Aleksandrov-Bakelman-Pucci-Krylov-Tso
estimate enable us to estimate the supremum of a solution to $-u_{t}+Lu=f$ in a bounded Lipschitz domain in $\mathbb{R}^{n+1}$  in terms of the Dirichlet data on the boundary and the $L^{p}$ norm of $f$ with some constant $p<n+1$ depending on $n,\,\nu,\,S$.

With the assumptions and preparations above, the main results in this paper are then expressed by Theorems \ref{variant} and \ref{harnack}.

\begin{defn}\label{space}
	Given $p\geq1$, for any open set $\Omega\subset\mathbb{R}^{n+1}$, we define the space
	\begin{equation}
	W_{p}(\Omega):= C(\Omega)\cap W_{p,p}^{2,1}(\Omega),\label{eq:W}
	\end{equation} 
	where $f\in W_{p,p}^{2,1}(\Omega)$ means $f_{t},\, D_{i}f,\, D_{ij}f\in\left(L_{x}^{p}L_{t}^{p}\right)_{loc}$. 
\end{defn}

\begin{thm}\label{variant}
	Under the assumptions above, there are constants $p:=p(\nu,\, n,\, S)<n+1$ and $N$ depending on \textup{$\nu$, $n$ and $S$} such that if $u\in C(\overline{\Omega})\cap W_{p}(\Omega)$ satisfies $-u_{t}+Lu\geq f$ in $\Omega$ with $f\in L^{p}(\Omega)$, and $u\leq0$ on $\partial_{p}\Omega$, then 
	\begin{equation}
	\sup_{\Omega}u\leq Nr^{2-\frac{n+2}{p}}||f||_{L^{p}}\label{abpkt}
	\end{equation}
where $r$ is the diameter of $\Omega$.
\end{thm}

With the help of the variant of Aleksandrov-Bakelman-Pucci-Krylov-Tso estimate \eqref{abpkt}, we can obtain:
\begin{thm}[Interior Harnack Inequality]\label{harnack}
	Suppose  $u\in C(\overline{Q_{2r}(Y)})\cap W_{p}(Q_{2r}(Y))$ and $-u_{t}+Lu=0$ in $Q_{2r}(Y)$,
	$Y=(y,s)\in\mathbb{R}^{n+1}$ and $r>0$. If $u\geq0$, then
	\begin{equation}
	\sup_{Q^{0}}u\leq N\inf_{Q_{r}}u,\label{eq:harc}
	\end{equation}
	where $N=N(n,\nu,S)$ and $Q^{0}=B_{r}(y)\times(s-3r^{2},s-2r^{2})$.
\end{thm}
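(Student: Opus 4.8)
\noindent\emph{Proof proposal.} The plan is to deduce the Harnack inequality from Theorem \ref{variant} by running the parabolic Krylov--Safonov machinery, using as intermediate tools the two growth theorems (one for subsolutions, one for supersolutions) that follow from the ABP--Krylov--Tso estimate. First I would normalise: by the parabolic dilation $x\mapsto rx,\ t\mapsto r^{2}t$ together with a translation --- transformations under which both the parabolicity constant $\nu$ and the Morrey quantity $S$ in \eqref{eq:23} are invariant, as recorded in the Introduction --- the statement reduces to $Y=0$, $r=1$. So it suffices to produce $N=N(n,\nu,S)$ such that any nonnegative $u\in C(\overline{Q_{2}})\cap W_{p}(Q_{2})$ with $-u_{t}+Lu=0$ in $Q_{2}$ satisfies $\sup_{Q^{0}}u\le N\inf_{Q_{1}}u$, with $Q^{0}=B_{1}\times(-3,-2)$ and $Q_{1}=B_{1}\times(-1,0)$; note the $r^{2}$ time-gap between $Q^{0}$ and $Q_{1}$, which is precisely the ``waiting time'' forced by the parabolic geometry.

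The heart of the matter is a weak Harnack inequality: there should exist $\varepsilon=\varepsilon(n,\nu,S)>0$ and $N_{1}=N_{1}(n,\nu,S)$ such that every nonnegative supersolution $v$ of $-v_{t}+Lv\le0$ in $Q_{2}$ obeys
\[
\Bigl(\frac{1}{|Q^{0}|}\int_{Q^{0}}v^{\varepsilon}\,dX\Bigr)^{1/\varepsilon}\le N_{1}\,\inf_{Q_{1}}v .
\]
The first ingredient is the supersolution growth theorem: if $v\ge0$ and the set $\{v\ge1\}$ fills at least a fixed fraction of a ``past'' subcylinder, then $v\ge\delta$ on the corresponding ``future'' subcylinder for a universal $\delta>0$. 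I would get this by inserting an explicit barrier $w$ (a truncated power of a paraboloid) into Theorem \ref{variant}; the subtlety is that the source $-w_{t}+Lw$ --- including the drift contribution $b\cdot Dw$, which need \emph{not} be bounded --- must be estimated only in $L^{p}$ with the subcritical exponent $p=p(n,\nu,S)<n+1$, and the scale-invariant Morrey bound $S$ controls $\|b\cdot Dw\|_{L^{p}}$ uniformly after rescaling. The second ingredient is a parabolic Calder\'on--Zygmund (``ink-spots'') argument on \emph{stacked} cylinders which, iterated, upgrades the one-step estimate to the power decay $|\{v>t\}\cap Q^{0}|\le C\,t^{-\varepsilon}|Q^{0}|$ once $\inf_{Q_{1}}v\le1$; this is the displayed inequality after renormalising $v$.

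In parallel I would use the companion subsolution growth theorem (again a barrier fed into Theorem \ref{variant}, a covering/iteration on stacked cylinders, and a standard interpolation to reach small exponents) to obtain, for every nonnegative subsolution $w$ in $Q_{2}$,
\[
\sup_{Q^{0}}w\le N_{2}\,\Bigl(\frac{1}{|\widehat Q|}\int_{\widehat Q}w^{\varepsilon}\,dX\Bigr)^{1/\varepsilon},
\]
with $N_{2}=N_{2}(n,\nu,S)$ and $\widehat Q$ a slightly enlarged past cylinder still contained in $Q_{2}$. Since the solution $u$ is simultaneously a sub- and a supersolution, this estimate (with exponent $\varepsilon$) followed by the weak Harnack inequality --- with the intermediate cylinders chosen so that $\widehat Q$ serves as the ``past'' box above and everything nests inside $Q_{2}$, a finite chain of overlapping cylinders taking care of the general configuration --- yields $\sup_{Q^{0}}u\le N_{2}N_{1}\inf_{Q_{1}}u$, which is \eqref{eq:harc}. (If $\inf_{Q_{1}}u=0$ the same chain forces $u\equiv0$ on $Q^{0}$, so the inequality still holds.)

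The iteration and the stacked-cylinder combinatorics are by now routine; the delicate point --- and the one where I expect to spend the most effort --- is to keep \emph{every} universal constant, in particular $\varepsilon$ and $N$, depending only on $n$, $\nu$ and $S$. Because Theorem \ref{variant} supplies only an $L^{p}$ bound with $p<n+1$ rather than the classical $p=n+1$, each barrier in the growth theorems must be engineered so that its source lies in $L^{p}$ with a norm that survives the parabolic rescaling; checking that the unbounded Morrey drift enters the estimates purely through $S$ is exactly the step that genuinely differs from the bounded-drift treatment in \cite{GC}.
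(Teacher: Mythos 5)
Your strategy is sound but it is architecturally different from the one the paper actually uses, and it is worth being clear about where the two diverge. You propose the classical Krylov--Safonov splitting: a weak Harnack inequality for supersolutions (an $L^{\varepsilon}$ average over the past box controlled by $\inf_{Q_{1}}v$, obtained from a barrier plus a stacked-cylinder Calder\'on--Zygmund iteration yielding the power decay $|\{v>t\}\cap Q^{0}|\le Ct^{-\varepsilon}|Q^{0}|$) glued to a local maximum principle for subsolutions through the common exponent $\varepsilon$. The paper instead follows the Ferretti--Safonov route: it never forms an $L^{\varepsilon}$ norm of the supersolution and never performs a Calder\'on--Zygmund decomposition of the solution. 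It picks a weighted maximum point $X_{0}$ of $d^{\gamma}u$ over an intermediate region, uses the first growth theorem (Theorem \ref{thm:(First-Growth-Theorem)}) to extract a measure lower bound for $\{u>\tfrac12 u(X_{0})\}$ near $X_{0}$, converts that into a pointwise lower bound via Corollary \ref{lower} (the third growth theorem), and then propagates the lower bound down to $Q_{1}$ by Lemma \ref{lem:quotient}, whose loss $(2r/\rho)^{\gamma}$ is exactly cancelled by the weight $d^{\gamma}$. Your subsolution half is essentially already present as Theorem \ref{thm:Lepsilon}; the genuinely new piece your route demands, and which the paper's route avoids, is the supersolution power-decay estimate.

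That is also where your proposal is thinnest. You dismiss the ``iteration and stacked-cylinder combinatorics'' as routine, but in the parabolic setting the ink-spots/stacking lemma (information propagating only forward in time, elongated stacks of cylinders) is the longest and most delicate part of the weak Harnack proof, and it is precisely the work the paper's weighted-maximum-point architecture is designed to sidestep (the only place a comparable covering argument enters the paper is inside the quoted proof of the third growth theorem). On the other hand, you correctly identify the one point where the critical Morrey drift genuinely changes the analysis: every barrier's source term, including $b\cdot Dw$, must be measured in $L^{p}$ with the exponent $p<n+1$ supplied by Theorem \ref{variant}, and the scale invariance of $S$ under the parabolic dilation is what keeps the constants universal --- this matches how the drift enters the paper's proofs of the growth theorems (e.g.\ the term $S^{\frac{1}{n+1}}\mu^{\frac1p-\frac{1}{n+1}}$ in the first growth theorem). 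If you carry out the stacking argument in full, your route yields the weak Harnack inequality as a strictly stronger byproduct; the paper's route is shorter once the three growth theorems are in hand.
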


Harnack inequalities have many important applications, not only in
differential equations, but also in other areas, such as diffusion
processes, geometry, etc. Unlike the classical maximum principle,
the interior Harnack inequality is far from obvious. For elliptic
and parabolic equations with measurable coefficients in the divergence
form, it was proved by Moser in the papers \cite{M61},\cite{M64}.
However, a similar result for non-divergence equations was obtained
15 years later after Moser's papers by Krylov and Safonov \cite{KS},
\cite{S80} in 1978-80. Their proofs relied on some improved versions
of growth theorems from the book by Landis \cite{EML}. These growth
theorems control the behavior of (sub-, super-) solutions of second
order elliptic and parabolic equations in terms of the Lebesgue measure
of areas in which solutions are positive or negative. So certainly, if some
estimate can directly import the information about measure, it should
be useful. In \cite{FS}, Ferretti and Safonov used growth theorems as a common
background for both divergence and non-divergence equations and used
these three growth theorems to derive the interior Harnack inequality.
Even in the one-dimensional case, the Harnack inequality fails for
equations of a ``joint'' structure,
which combine both divergence and non-divergence parts. One can find
detailed discussion in \cite{CS13}.

At the beginning, the interior Harnack inequality was proved with bounded drift. Later
on, this condition was relaxed to subcritical drift $b$. For the subcritical case, we can always rescale the
problem. In small scale, the drift will work like a perturbation from the case without drift. But for the critical situation, our common tricks do not work. One can
find a historical overview of this progress in \cite{NU}.
For non-divergence elliptic equations of second order, in \cite{S10},
Safonov shown the interior Harnack inequality for the scaling critical
case $b\in L^{n}$.  In \cite{GC},
the author proved the interior Harnack inequality for parabolic equations of second order in non-divergence form with the drift $b$   in certain Lebesgue spaces which are scaling invariant. In this note which appears as a companion of the earlier paper \cite{GC}, we consider the case when the drift $b\in M_{n+1,n+1}^{\frac{1}{n+1}}(\Omega)$ which is again scaling invariant. Similar results for both divergence
form elliptic and parabolic equations are presented in \cite{NU}.

We will follow the unified approach to growth theorems and the interior
Harnack inequality developed in \cite{FS}. For this purpose, we need to prove three growth
theorems and derive the interior Harnack inequality as a consequence
for parabolic equations with critical drift formulated as above. In order to take the measure conditions into account
and help us carry out growth theorems, we discuss the variant of Aleksandrov-Bakelman-Pucci-Krylov-Tso estimate \eqref{abpkt}. Although we only consider the case $b\in M_{n+1,n+1}^{\frac{1}{n+1}}(\Omega)$, one can see from the proofs, our approach works well for other pairs $(p,q,\alpha)$ satisfying condition \eqref{eq:04} with $\alpha>0$ provided the associated standard Aleksandrov-Bakelman-Pucci-Krylov-Tso estimate holds, see Sections 2 and 3. For the sake of simplicity, we assume that all functions
(coefficients and solutions) are smooth enough. It is easy to get
rid of extra smoothness assumptions by means of standard approximation
procedures, see Section 7. We should notice that it is important to have appropriate
estimates for solutions with constants depending only on the prescribed
quantities, such as the dimension $n$, the parabolicity constant, etc.,
but not depending on ``additional'' smoothness.

This paper is organized as follows: In Section 1, we introduce our
basic assumptions and notations. In Section 2, we formulate a weak
version of the classical maximum principle, the Alexandrov-Bakelman-Pucci-Krylov-Tso
estimate, and some consequences of it. In Section 3, we establish
a variant of Aleksandrov-Bakelman-Pucci-Krylov-Tso based on some estimates
of Green's function. In Sections 4, 5, 6, we formulate and prove three
growth theorems and prove the interior Harnack inequality. Finally,
in Section 7, we use approximation to show all results are valid without
smoothness assumption. In the appendix, an example of loss of continuity of the solution to a parabolic equation with drift $b\in L_{x,t}^{n+1}$ will be presented.

\subsection{Notations:}

In this paper, we use summation convention.

``$A:=B$'' or ``$B=:A$'' is the definition of $A$ by means of
the expression $B$.

$\mathbb{R}^{n}$ is the n-dimensional Euclidean space, $n\geq1$,
with points $x=(x_{1},\ldots,x_{n})^{t}$, where $x_{i}$'s are real
numbers. Here the symbol $t$ stands for the transposition of vectors
which indicates that vectors in $\mathbb{R}^{n}$ are treated as column
vectors. For $x=(x_{1},\ldots,x_{n})^{t}$ and $y=(y_{1},\ldots,y_{n})^{t}$
in $\mathbb{R}^{n}$, the scalar product $(x,y):=\Sigma x_{i}y_{i}$,
the length of $x$ is $|x|:=(x,x)^{\frac{1}{2}}$.

For a Borel set $\Gamma\subset\mathbb{R}^{n}$, $\bar{\Gamma}:=\Gamma\cup\partial\Gamma$
is the closure of $\Gamma$, $|\Gamma|$ is the n-dimensional Lebesgue
measure of $\Gamma$. Sometimes we use the same notation for the surface
measure of a subset $\Gamma$ of a smooth surface $S$.

For real numbers $c$, we denote $c_{+}:=\max(c,0)$, $c_{-}:=\max(-c,0)$.

In order to formulate our results, we need some standard definitions
and notations for the setting of parabolic equations.
\begin{defn}\label{cylinder}
	Let $Q$ be an open connected set in $\mathbb{R}^{n+1}$, $n\geq1$.
	The parabolic boundary $\partial_{p}Q$ of $Q$ is the set of all
	points $X_{0}=(x_{0},t_{0})\in\partial Q$, such that there exists
	a continuous function $x=x(t)$ on the interval $[t_{0},t_{0}+\delta)$
	with values in $\mathbb{R}^{n}$, such that $x(t_{0})=x_{0}$ and $(x(t),t)\in Q$
	for all $t\in(t_{0},t_{0}+\delta)$. Here $x=x(t)$ and $\delta>0$
	depend on $X_{0}$. In particular, for cylinders $Q_{U}=U\times(0,T)$ with $U\subset \mathbb{R}^n$,
	the parabolic boundary $\partial_{p}Q_{U}:=\left(\partial_{x}Q_{U}\right)\cup\left(\partial_{t}Q_{U}\right)$,
	where $\partial_{x}Q_{U}:=(\partial U)\times(0,T)$, $\partial_{t}Q_{U}:=U\times\{0\}$.
\end{defn}
We will use the following notation for the "standard" parabolic cylinder. For $Y=(y,s)$ and $r>0$, we define $Q_{r}(Y):=B_{r}(y)\times(s-r^{2},s),$
where $B_{r}(y):=\{x\in\mathbb{R}^{n}:|x-y|<r\}$.

\section{Preliminaries}

In this section, we briefly discuss some well-known theorems and results
which are crucial for us to carry out the discussion in the later
parts of this paper.  We use the notation $u\in W_{n+1}$ in the sense of Definition \ref{space}.
\begin{thm}[Aleksandrov-Bakelman-Pucci-Krylov-Tso
	estimate]
\label{thm:(Alexandrov-Bakelman-Pucci-Krylo}\label{thm:ABPKT} Suppose $u\in W_{n+1}(\Omega)$, $\Omega\subset Q_{r}$ and $-u_{t}+Lu\geq f$.  If
$\sup_{\partial_{p}\Omega}u\leq0$, then
\begin{equation}
\sup_{\Omega}u\leq N\left(r^{\frac{n}{n+1}}+\left\Vert b\right\Vert _{L^{n+1}}^{n}\right)||f||_{L^{n+1}}\label{eq:AK-1}
\end{equation}
where $N=N(n,\nu)$.
\end{thm}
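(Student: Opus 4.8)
The plan is to reduce the statement with drift to the classical Aleksandrov–Bakelman–Pucci–Krylov–Tso estimate without drift, treating the term $\sum_i b_i D_i u$ as part of the inhomogeneity. Recall that for a supersolution of $-w_t + \sum_{ij} a_{ij} D_{ij} w \ge g$ in $\Omega \subset Q_r$ with $w \le 0$ on $\partial_p \Omega$, the classical estimate gives $\sup_\Omega w \le N(n,\nu)\, r^{n/(n+1)} \|g_+\|_{L^{n+1}(\Gamma)}$, where $\Gamma$ is the ``upper contact set'' on which $w$ coincides with its convex-in-$x$, monotone-in-$t$ envelope; on $\Gamma$ the gradient $Du$ is controlled. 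Applying this with $g = f - \sum_i b_i D_i u$, I would get
\[
\sup_\Omega u \le N r^{\frac{n}{n+1}}\Big( \|f\|_{L^{n+1}} + \big\| \, |b|\,|Du| \, \big\|_{L^{n+1}(\Gamma)} \Big).
\]
First I would estimate the second term by Hölder: since on the contact set $\Gamma$ one has the pointwise bound $|Du(X)| \le C\, \mathrm{diam}(\Omega)^{-1} \sup_\Omega u \cdot (\text{something})$ — more precisely the contact-set machinery bounds $|Du|$ on $\Gamma$ by $r^{-1}\sup_\Omega u$ up to constants when $u\le 0$ on $\partial_p\Omega$ — actually the cleaner route is to keep $\||b|\,|Du|\|_{L^{n+1}(\Gamma)} \le \|b\|_{L^{n+1}(\Omega)} \,\sup_\Gamma |Du|$ and then absorb.

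More carefully, I would use the standard fact from the contact-set proof that on $\Gamma$ the image of $X \mapsto (Du(X), -$normalization$)$ covers a cone, so that one actually controls $\sup_\Gamma |Du|$ in terms of $\frac{1}{r}\sup_\Omega u$. Thus $\||b|\,|Du|\|_{L^{n+1}(\Gamma)} \le \frac{N}{r}\,\|b\|_{L^{n+1}(\Omega)}\,\sup_\Omega u$. Substituting back,
\[
\sup_\Omega u \le N r^{\frac{n}{n+1}} \|f\|_{L^{n+1}} + \frac{N}{r^{\,1/(n+1)}}\,\|b\|_{L^{n+1}(\Omega)}\,\sup_\Omega u .
\]
If the coefficient of $\sup_\Omega u$ on the right were smaller than $1/2$ I could absorb it and be done; but it is not small — this is exactly the critical-scaling obstruction (the quantity $r^{-1/(n+1)}\|b\|_{L^{n+1}}$ is scale-invariant, by \eqref{eq:23}, so one cannot make it small by shrinking $r$). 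The way around it, and what I expect to be the main technical step, is to iterate over a chain of scales or to use a more refined contact-set estimate: instead of the crude $\sup_\Gamma|Du| \lesssim r^{-1}\sup u$, one slices $\Omega$ and uses that on the contact set the ``good'' part of $Du$ is tied to the local oscillation, then sums a geometric-type series in which the $\|b\|_{L^{n+1}}$ factors accumulate into the $\|b\|_{L^{n+1}}^n$ appearing in \eqref{eq:AK-1}. The exponent $n$ on $\|b\|_{L^{n+1}}$ and the additive (rather than multiplicative) structure $r^{n/(n+1)} + \|b\|_{L^{n+1}}^n$ strongly suggest an induction on $n$ or an iteration producing $n$-fold products of the drift norm.

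Concretely, I would set $M := \sup_\Omega u$, assume WLOG $M>0$, and run the following bootstrap: from the displayed inequality, $M \le N r^{n/(n+1)}\|f\|_{L^{n+1}} + \theta M$ with $\theta = N r^{-1/(n+1)}\|b\|_{L^{n+1}}$. Rather than absorbing, I keep re-expanding $M$ on the right using the same inequality restricted to the sublevel sets $\{u > M/2\}$, $\{u>3M/4\}$, etc., where the relevant portion of $b$ has small $L^{n+1}$ mass by absolute continuity of the integral over shrinking contact sets; each reinsertion costs one factor of $\|b\|_{L^{n+1}}$ and after finitely many steps — governed by $n$ — the remainder is controlled, yielding the stated bound with a constant $N(n,\nu)$ and the explicit dependence $r^{n/(n+1)} + \|b\|_{L^{n+1}}^n$ on the data. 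The main obstacle is making this iteration quantitatively honest: showing that the contact-set gradient bound localizes to sublevel sets in a way that genuinely extracts a small factor at each stage (this is where the geometry of the upper contact set, and not merely Hölder's inequality, is essential), and checking that the number of iterations and the accumulated constant depend only on $n$ and $\nu$.
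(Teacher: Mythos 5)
First, a point of reference: the paper does not prove Theorem \ref{thm:ABPKT} at all. It is quoted as a known result, with the proof attributed to Nazarov \cite{AIN} and Lieberman \cite{GL2}. So your sketch has to be measured against the standard proofs in those references rather than against anything in this paper.

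Your proposal has a genuine gap, and you have in fact located it yourself. The first half (move $b\cdot Du$ to the right-hand side, bound $|Du|$ on the contact set by $r^{-1}\sup u$, apply H\"older, try to absorb) cannot close, because the coefficient $r^{-1/(n+1)}\|b\|_{L^{n+1}}$ is scale-invariant and need not be small. The second half --- re-expanding $M=\sup u$ over the sublevel sets $\{u>M/2\}$, $\{u>3M/4\}$, $\ldots$ and claiming that ``the relevant portion of $b$ has small $L^{n+1}$ mass by absolute continuity'' --- is not yet an argument: the superlevel sets need not shrink in measure, the upper contact set of $u-M/2$ is not contained in that of $u$, and nothing in the sketch forces a quantified gain per step; nor does the additive form $r^{n/(n+1)}+\|b\|_{L^{n+1}}^{n}$ arise from an induction on $n$. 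The actual mechanism (Krylov \cite{NVK}, Nazarov \cite{AIN}, Lieberman \cite{GL2}; in the elliptic case, Gilbarg--Trudinger, Thm.~9.1) is different: one never pulls $\sup_{\Gamma}|Du|$ out of the integral. On the upper contact set one has the pointwise inequality
\begin{equation*}
(-u_t)\det(-D^2u)\;\le\;N(n,\nu)\,\bigl(f_-+|b|\,|Du|\bigr)^{n+1},
\end{equation*}
and one integrates the Jacobian of the parabolic normal map $X\mapsto\bigl(Du,\,u-x\cdot Du\bigr)$ against a weight $g(Du)$ of the form $g(p)=\bigl(|p|^{n+1}+\mu^{n+1}\bigr)^{-1}$. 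The weight exactly cancels the $|Du|^{n+1}$ factor coming from the drift, leaving $\int_{\Gamma}|b|^{n+1}$ plus $\mu^{-(n+1)}\|f\|_{L^{n+1}}^{n+1}$ on one side, and on the other a lower bound for the weighted measure of the image cone, which grows without bound as $\sup_\Omega u\to\infty$; optimizing over $\mu$ then yields \eqref{eq:AK-1} with $N=N(n,\nu)$. This weighted normal-map device is the missing idea; the crude pointwise gradient bound followed by H\"older discards exactly the structure that makes the drift term harmless.
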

One can find the detailed proof of the above standard version of Aleksandrov-Bakelman-Pucci-Krylov-Tso
estimate in \cite{AIN} and \cite{GL2}. From \eqref{eq:AK-1}, one
can see if we take $r^{\frac{n}{n+1}}$ out of the bracket, we will
have
\[
\sup_{\Omega}u\leq\sup_{\partial\Omega}u+Nr^{\frac{n}{n+1}}\left(1+\left\Vert b\right\Vert^{n} _{M_{n+1}^{\frac{1}{n+1}}(\Omega)}\right)||f||_{L^{n+1}}.
\]
So it is natural to consider the case $b\in M_{n+1}^{\frac{1}{n+1}}(\Omega)$.
\begin{rem}
In \cite{AIN}, Nazarov shown the Aleksandrov-Bakelman-Pucci-Krylov-Tso
estimate holds for the drift $b\in L_{x}^{p}L_{t}^{q}$, i.e.,
\[
\left\Vert b\right\Vert _{L_{x}^{p}L_{t}^{q}}=\left(\intop\left[\int\left|b(x,t)\right|^{q}dt\right]^{\frac{p}{q}}dx\right)^{\frac{1}{p}}<\infty,
\]
for
\[
\frac{n}{p}+\frac{2}{q}\leq1,\,\,\,\,p,q\geq1.
\]
The proof was based on Krylov\textquoteright{}s ideas and methods \cite{NVK}.
We believe if $b$ is in other scaling invariant Morrey space, and
the Aleksandrov-Bakelman-Pucci-Krylov-Tso estimate holds for the corresponding
case, then the proofs in this note also hold.\end{rem}

\begin{thm}[Maximal Principle]\label{thm:MP}
	Let $Q$ be a bounded open set
	in $\mathbb{R}^{n+1}$, and let a function $u\in C^{2,1}\left(\bar{Q}\backslash\partial_{p}Q\right)\cap C(\bar{Q})$
	satisfy the inequality $-u_{t}+Lu\geq0$ in $Q$. Then
	\begin{equation}
	\sup_{Q}u=\sup_{\partial_{p}Q}u\label{eq:-1}
	\end{equation}
	
\end{thm}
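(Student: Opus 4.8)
The statement is the classical weak maximum principle for non-divergence parabolic operators; under the paper's standing smoothness hypothesis on the coefficients the drift causes no trouble, since it multiplies $Du$, which vanishes at a maximum. My plan has two moves — reduce to a \emph{strict} subsolution, then localize at the maximum — followed by one delicate point. For the reduction, note that $\partial_t$ and $L$ annihilate constants and $\partial_t(\epsilon t)=\epsilon$, $L(\epsilon t)=0$, so for each $\epsilon>0$ the function $u_\epsilon:=u-\epsilon t$ satisfies $-\partial_t u_\epsilon+Lu_\epsilon=(-u_t+Lu)+\epsilon\ge\epsilon$ in $Q$ and still lies in $C^{2,1}(\bar Q\setminus\partial_p Q)\cap C(\bar Q)$. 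Since $Q$ is bounded its time-projection lies in a finite interval $[t_-,t_+]$, whence $\sup_Q u_\epsilon\ge\sup_Q u-\epsilon t_+$ and $\sup_{\partial_p Q}u_\epsilon\le\sup_{\partial_p Q}u-\epsilon t_-$; so once the conclusion is known for $u_\epsilon$ we obtain $\sup_Q u\le\sup_{\partial_p Q}u+\epsilon(t_+-t_-)$, and letting $\epsilon\downarrow0$ finishes. It therefore suffices to treat $v$ with $-v_t+Lv\ge\epsilon$ in $Q$ and prove $\sup_Q v\le\sup_{\partial_p Q}v$.

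For the localization, since $\bar Q$ is compact and $v\in C(\bar Q)$, $v$ attains its maximum $m:=\sup_{\bar Q}v=\sup_Q v$, and $\sup_{\partial_p Q}v\le m$ because $\partial_p Q\subset\bar Q$; suppose for contradiction $m>\sup_{\partial_p Q}v$. Then the compact level set $\{v=m\}$ is disjoint from $\partial_p Q$; pick $X_0=(x_0,t_0)$ in it with $t_0$ minimal. If $X_0$ is interior to $Q$, then $x\mapsto v(x,t_0)$ has an interior maximum at $x_0$, giving $Dv(X_0)=0$ and $D^2v(X_0)\le0$, while minimality of $t_0$ gives $v(x_0,t)<m$ for $t<t_0$ near $t_0$, hence $v_t(X_0)\ge0$ from the backward difference quotient. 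Writing $a=(a_{ij})$ for the (symmetrized) coefficient matrix, uniform parabolicity \eqref{eq:02} gives $a(X_0)\ge\nu^{-1}I>0$, so $\sum_{ij}a_{ij}(X_0)D_{ij}v(X_0)=\mathrm{tr}\big(a(X_0)^{1/2}D^2v(X_0)\,a(X_0)^{1/2}\big)\le0$, and $\sum_i b_i(X_0)D_iv(X_0)=0$; altogether $-v_t(X_0)+Lv(X_0)\le0<\epsilon$, contradicting $-v_t+Lv\ge\epsilon$ at the interior point $X_0$.

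The one point requiring care — and the main obstacle — is a maximizer $X_0\in\partial Q\setminus\partial_p Q$, where the differential inequality is not directly at hand. There $v$ is still $C^{2,1}$, the maximum conditions $Dv(X_0)=0$, $D^2v(X_0)\le0$, $v_t(X_0)\ge0$ follow as above from a nearby time slice and from times $t<t_0$, and, the coefficients being smooth, letting $(x_0,t)\to X_0$ with $t<t_0$ in $-v_t+Lv\ge\epsilon$ preserves the strict inequality, so the same trace computation closes the argument; for a cylinder $U\times(0,T)$ this case is just the open top $U\times\{T\}$, where moreover $x_0\in\partial U$ would force $v(x_0,t)\to m$ along $\partial_p Q$ as $t\uparrow T$, already contradicting $m>\sup_{\partial_p Q}v$, while a general bounded $Q$ is reduced to interior maximizers by exhausting it with the subdomains $Q\cap\{t<\tau\}$, whose parabolic boundaries lie in $\partial_p Q$, and letting $\tau\uparrow t_+$. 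I expect precisely this interplay between the definition of $\partial_p Q$ and the fact that $-u_t+Lu\ge0$ is posited only in the open set $Q$ to be the sole real difficulty; everything else is routine sign bookkeeping. Alternatively, the theorem follows immediately from Theorem~\ref{thm:ABPKT}: enclose $Q$ in a parabolic cylinder and apply \eqref{eq:AK-1} with $f\equiv0$ to $u-\sup_{\partial_p Q}u$, which is again a subsolution since constants are annihilated, then use $\sup_{\partial_p Q}u\le\sup_{\bar Q}u=\sup_Q u$.
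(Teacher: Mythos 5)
The paper offers no proof of Theorem \ref{thm:MP} at all: it is stated as a classical fact (it is Lemma 1.1 of [FS], going back to [EML]) and immediately used, so there is nothing to compare your argument against line by line. Your reduction to a strict subsolution via $u-\epsilon t$ and the first/second derivative test at an interior maximizer (with $v_t\ge 0$ from the backward difference quotient and $\operatorname{tr}(aD^2v)\le 0$) is exactly the standard route, and your closing observation that the theorem also follows formally from Theorem \ref{thm:ABPKT} with $f\equiv 0$ applied to $u-\sup_{\partial_p Q}u$ is legitimate within the paper's logical structure, since the ABPKT estimate is imported as a black box.

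The one genuine gap is precisely where you said the difficulty lies, and your treatment of it is incomplete for a general bounded open $Q$. At a maximizer $X_0=(x_0,t_0)\in\partial Q\setminus\partial_p Q$ you assert that $Dv(X_0)=0$ and $D^2v(X_0)\le 0$ ``follow as above from a nearby time slice,'' but this requires that $\bar Q$ contain a full spatial neighborhood of $x_0$ at time $t_0$ (equivalently, a backward cylinder $B_{\rho'}(x_0)\times(t_0-\rho'',t_0)\subset Q$). The paper's $\partial_p Q$ is defined by the existence of a forward-in-time path into $Q$, not by backward cylinders, and these are not interchangeable: for $Q=\{|x|^2+t^2<1\}$ the point $(0,1)$ lies in $\partial Q\setminus\partial_p Q$ and its time slice of $\bar Q$ is the single point $\{0\}$, so no spatial derivative test is available there. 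That particular example is rescued because $(0,1)\in\overline{\partial_p Q}$, whence $v(X_0)\le\sup_{\partial_p Q}v$ by continuity — but to make your Case~2 airtight you must split it: maximizers in $\partial Q\cap\overline{\partial_p Q}$ are excluded by continuity alone, and for maximizers at positive distance from $\partial_p Q$ one needs the separate (true but not obvious) geometric lemma that such points do admit a backward cylinder inside $Q$, after which your limiting argument in $t\uparrow t_0$ closes the case. Your proposed exhaustion by $Q\cap\{t<\tau\}$ does not supply this, because $\overline{Q\cap\{t<\tau\}}$ still contains the portion of $\partial Q\setminus\partial_p Q$ below time $\tau$, so the truncated domains inherit exactly the same problematic boundary points. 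For cylinders — the only domains the paper actually applies the theorem to — your argument is complete.
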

As an easy consequence of the maximal principle and the Alexandrov-Bakelman-Pucci-Krylov-Tso
estimate, we have the well-known comparison principle.
\begin{thm}[Comparison Principle]
	\label{thm:Comparison}Let $Q$ be a bounded
	domain in $\mathbb{R}^{n+1}$, $u,\, v\in C^{2,1}\left(\bar{Q}\backslash\partial_{p}Q\right)\cap C(\bar{Q})$,
	$-u_{t}+Lu\leq-v_{t}+Lv$ in $Q$, and $u\geq v$ on $\partial_{p}Q$,
	then $u\geq v$ on $\bar{Q}$.
\end{thm}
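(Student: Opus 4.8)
The plan is to reduce the statement to the maximum principle (Theorem~\ref{thm:MP}) by linearization. Set $w:=v-u$. Since $-\partial_{t}+L$ is a linear operator (there is no zeroth‑order term in $L$), the hypothesis $-u_{t}+Lu\leq-v_{t}+Lv$ in $Q$ rewrites as
\[
-w_{t}+Lw=(-v_{t}+Lv)-(-u_{t}+Lu)\geq0\quad\text{in }Q ,
\]
and $u\geq v$ on $\partial_{p}Q$ gives $w\leq0$ on $\partial_{p}Q$. Moreover $w\in C^{2,1}(\bar{Q}\backslash\partial_{p}Q)\cap C(\bar{Q})$ because $u$ and $v$ are, so $w$ satisfies exactly the hypotheses of Theorem~\ref{thm:MP}.

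Applying Theorem~\ref{thm:MP} to $w$ yields $\sup_{Q}w=\sup_{\partial_{p}Q}w\leq0$, hence $w\leq0$ on $Q$, and by continuity $w\leq0$ on $\bar{Q}$; that is $v\leq u$ on $\bar{Q}$, which is the assertion. Alternatively — and this is the route suggested by placing the statement immediately after Theorem~\ref{thm:ABPKT} — one may invoke the Aleksandrov--Bakelman--Pucci--Krylov--Tso estimate \eqref{eq:AK-1} with inhomogeneous term $f\equiv0$: enclosing $Q$ in a parabolic cylinder $Q_{r}$, the estimate forces $\sup_{Q}w\leq N\bigl(r^{\frac{n}{n+1}}+\|b\|_{L^{n+1}}^{n}\bigr)\|0\|_{L^{n+1}}=0$, and one concludes as before.

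The only point requiring a little care is regularity near the parabolic boundary: the hypotheses give $C^{2,1}$‑smoothness of $u,v$ only on $\bar{Q}\setminus\partial_{p}Q$, so $w$ and its derivatives need not lie in $W_{n+1}(Q)$ up to the closure, and Theorem~\ref{thm:ABPKT} is not literally applicable on $Q$ itself. In the ABPKT route one therefore first applies \eqref{eq:AK-1} on subdomains exhausting $Q$ from inside (e.g.\ $Q\cap\{t<\tau\}$ with $\tau$ below the terminal time, or slightly shrunk cylinders) and then passes to the limit using $w\in C(\bar{Q})$; in the maximum‑principle route no such approximation is needed, since Theorem~\ref{thm:MP} is stated under precisely the regularity $w$ enjoys. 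For this reason I expect the maximum‑principle argument to be the one carried out, and I see no essential obstacle beyond this routine bookkeeping and a check of the direction of the inequality (which, as verified above, makes $w=v-u$ a subsolution with $w\leq0$ on $\partial_{p}Q$).
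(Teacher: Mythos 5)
Your proof is correct and matches the paper's intent exactly: the paper gives no written proof, stating only that the comparison principle is an easy consequence of Theorem~\ref{thm:MP}, and your reduction to the maximum principle via $w=v-u$ is precisely that argument. Your remark that the ABPKT route needs extra care with regularity near $\partial_{p}Q$, while the maximum-principle route does not, is a fair and accurate observation.
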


\section{A variant of Aleksandrov-Bakelman-Pucci-Krylov-Tso Estimate}

\subsection{Estimates of Green's function: }

In this subsection, we show some estimates for Green's function following
\cite{FSt,GL} in order to show the variant of Aleksandrov-Bakelman-Pucci-Krylov-Tso
estimate \eqref{abpkt}.

We consider the non-divergence second order parabolic equations of
the form

\begin{equation}
-u_{t}+Lu=-u_{t}+\sum_{i,j=1}^{n}a_{ij}D_{ij}u+\sum_{i=1}^{n}b_{i}D_{i}u\label{eq:21}
\end{equation}
defined on some Lipschitz domain $\Omega\subset\mathbb{R}^{n+1}$.

 We define Green's function $G$ : $\Omega\times\Omega\rightarrow\mathbb{R}$
satisfies following properties: if $u(x,t)$ has the form

\begin{equation}
u(x,t)=\intop_{\Omega}G(x,t,y,s)f(y,s)\,dyds\label{eq:22}
\end{equation}
 then it solves the Dirichlet problem:
\[
-u_{t}+Lu=-f,
\]
and
\[
u=0
\]
on $\partial_{p}\Omega$. Throughout this subsection, we will assume
all coefficients are smooth, then the existence of Green's function
is guaranteed. We will adapt the ideas in \cite{GL} to our parabolic
setting. The key step is to verify Lemma 2.1 in \cite{GL} holds in
parabolic case with $b\in M_{n+1}^{\frac{1}{n+1}}(\Omega)$. We formulate
the following lemma which is similar to Lemma 2.1 in \cite{GL} under
condition \eqref{eq:23}.
\begin{lem}
\label{lem:Green1}Let $(x_{1},t_{1})\in\Omega$, then we can choose
$1>\rho>0$ depending on $\nu$ ,$n$ and $S$ such that $Q_{2\rho}(x_{1},t_{1})\subset\Omega$ and
there is a constant $C$ depends on $\nu$, $n$ and $S$, we have

\begin{equation}
\left(\intop_{Q_{\rho}(x_{1},t_{1})}G(x,t,y,s)^{\frac{n+1}{n}}\,dyds\right)^{\frac{n}{n+1}}\leq\frac{C}{\rho^{\frac{n+2}{n+1}}}\intop_{Q_{\frac{\rho}{2}}(x_{1},t_{1})}G(x,t,y,s)\,dyds\label{eq:25}
\end{equation}
for any $(x,t)\in\Omega$ and $t\leq t_{1}$ where $G$ is Green's
function defined as \eqref{eq:22}.\end{lem}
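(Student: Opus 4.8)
The plan is to adapt the argument of Lemma 2.1 in \cite{GL} to the parabolic setting, the essential new ingredient being that the drift $b$ only lies in the Morrey space $M_{n+1,n+1}^{\frac{1}{n+1}}(\Omega)$ rather than $L^\infty$. Fix $(x_1,t_1)\in\Omega$ and write $Q_\rho:=Q_\rho(x_1,t_1)$. First I would observe that, for $(x,t)$ fixed with $t\le t_1$, the function $w(y,s):=G(x,t,y,s)$ satisfies (in the $(y,s)$ variables) the adjoint-type homogeneous equation on $Q_{2\rho}\setminus\{(x,t)\}$; in particular, since $t\le t_1$ the singularity lies outside $Q_{2\rho}$ (by causality of the parabolic Green's function, $G(x,t,y,s)=0$ for $s\ge t$, so $w\ge 0$ is a genuine nonnegative solution of a uniformly parabolic non-divergence equation with drift on the whole cylinder $Q_{2\rho}$). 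The desired inequality \eqref{eq:25} is then a reverse-Hölder / local boundedness estimate for such nonnegative solutions: the $L^{\frac{n+1}{n}}$ average of $w$ over $Q_\rho$ is controlled by the $L^1$ average of $w$ over the smaller cylinder $Q_{\rho/2}$.

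The main step is to establish this reverse-Hölder inequality. I would proceed by a Moser-type iteration, or alternatively by the approach used in \cite{FSt,GL}: test the equation for $w$ against suitable cutoff functions and powers of $w$, integrate by parts, and absorb the first-order (drift) term. This is exactly where the Morrey condition \eqref{eq:23} enters. When the drift term $\sum_i b_i D_i w$ is paired against $\varphi^2 w^{\beta}$ and integrated, one obtains a term like $\int |b|\,|\nabla w|\,\varphi^2 w^{\beta}$, which after Cauchy–Schwarz splits into a piece that can be absorbed into the good $\int |\nabla (w^{(\beta+1)/2}\varphi)|^2$ term and a remainder of the form $\int |b|^2 \varphi^2 w^{\beta+1}$. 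To control this remainder on the cylinder $Q_{2\rho}$ I would use the parabolic Sobolev inequality together with a Hölder estimate in the Morrey norm: on $Q_r$, $\int_{Q_r}|b|^2 v^2 \le \||b|^2\|_{M} \cdot \|v^2\|_{(\text{dual Morrey})}$, and the scaling of $\||b|^2\|_{M_{\frac{n+1}{2},\frac{n+1}{2}}^{\frac{2}{n+1}}}$ is precisely such that — after choosing $\rho$ small depending on $\nu,n,S$ — the resulting constant in front of the Sobolev term is strictly less than the coercivity constant, so the bad term is absorbed. Concretely, $r^{-2}\int_{Q_r}|b|^{n+1}\le S$ gives, via Hölder, $\int_{Q_r}|b|^2 v^2 \lesssim S^{2/(n+1)} r^{2(n-1)/(n+1)} \,\|v\|_{L^{2(n+1)/(n-1)}(Q_r)}^2$, and the parabolic Sobolev embedding supplies the matching power of $r$ so that the coefficient is $\lesssim S^{2/(n+1)}$ times a dimensional constant, uniformly in $\rho\le 1$; after one additional shrinking of $\rho$ (depending on $\nu,n,S$) this is $<$ the coercivity constant. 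One then runs the iteration from exponent $1$ (or a slightly subcritical exponent, reaching $1$ by a final interpolation) up to exponent $\frac{n+1}{n}$ over the nested cylinders between $Q_{\rho/2}$ and $Q_\rho$.

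The hard part will be organizing the Caccioppoli/Moser machinery so that the constant genuinely depends only on $\nu$, $n$, $S$ and not on the (assumed, for now, smooth) pointwise size of $b$ — i.e. making the absorption argument quantitative and scale-invariant, which is the whole reason the criticality of the Morrey norm \eqref{eq:23} is being exploited. A secondary technical point is keeping careful track of the nonnegativity and the causality structure so that $w=G(x,t,\cdot,\cdot)$ really is a nonnegative supersolution on $Q_{2\rho}$ with no interior singularity; this is guaranteed by the constraint $t\le t_1$ together with $Q_{2\rho}(x_1,t_1)\subset\Omega$, and it is why the statement restricts to $t\le t_1$. Once the reverse-Hölder inequality is in hand, \eqref{eq:25} follows immediately by inserting the appropriate powers of $\rho$ coming from the normalizations $\rho^{-\frac{n+2}{n+1}}$ (the measure of $Q_\rho$ is $\sim \rho^{n+2}$), completing the proof.
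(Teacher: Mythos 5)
There is a genuine gap, and it lies at the heart of your strategy. You propose to view $w(y,s)=G(x,t,y,s)$ as a nonnegative solution in the $(y,s)$ variables and to run a Caccioppoli/Moser iteration, pairing the drift term against $\varphi^2w^{\beta}$ and integrating by parts to reach an energy estimate. But the operator here is in \emph{non-divergence} form with merely measurable $a_{ij}$, so in the $(y,s)$ variables $G$ satisfies the formal adjoint (double-divergence) equation $\partial_s G+D_{y_iy_j}(a_{ij}G)-D_{y_i}(b_iG)=0$. Its weak formulation pairs $G$ against $-\partial_t\varphi+L\varphi$ for \emph{twice-differentiable} test functions; there is no way to integrate by parts onto $w^{\beta}$ without putting derivatives on the measurable coefficients, so the term $\int|b|\,|\nabla w|\,\varphi^2w^{\beta}$ and the coercive term $\int|\nabla(w^{(\beta+1)/2}\varphi)|^2$ that your absorption argument relies on are simply not available. (This is precisely why \cite{FSt} and \cite{GL}, which you cite, do \emph{not} use Moser iteration for these Green's function bounds.) Two further problems: the inequality \eqref{eq:25} controls the higher norm on the \emph{larger} cylinder $Q_{\rho}$ by the $L^1$ norm on the \emph{smaller} cylinder $Q_{\rho/2}$ --- the opposite nesting from what a local-boundedness iteration produces, and reversing it would require a Harnack inequality for adjoint solutions, which is itself derived from estimates of this type; and your claim that $t\le t_1$ places the pole $(x,t)$ outside $Q_{2\rho}(x_1,t_1)$ is false (take $t\in(t_1-4\rho^2,t_1)$ and $x\in B_{2\rho}(x_1)$).

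The correct route, and the one the paper takes, is duality in the $(x,t)$ variables. By testing against nonnegative $f$ with $\|f\|_{L^{n+1}(Q_{\rho})}=1$, \eqref{eq:25} reduces to comparing $u_1(x,t)=\int_{Q_{\rho}}Gf$ with $u_2(x,t)=\int_{Q_{\rho/2}}G$, both of which solve genuine forward Dirichlet problems $-\partial_tu+Lu=-(\text{source})$ via \eqref{eq:22}. One then builds the explicit barrier $\eta^q$ with $\eta=1-\|x\|^2/(4\rho^2)-t/(4\rho^2)$, forms $w=u_1-C\rho^{-\frac{n+2}{n+1}}u_2$ and a corrected function $\bar w$, and applies the standard Aleksandrov--Bakelman--Pucci--Krylov--Tso estimate (Theorem \ref{thm:ABPKT}) together with the maximum principle to force $\sup w\le 0$. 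The Morrey hypothesis \eqref{eq:23} enters only through $\|b\|_{L^{n+1}(Q_{2\rho})}\le(2\rho)^{\frac{1}{n+1}}S^{\frac{1}{n+1}}$, which makes the drift contribution in the ABPKT bound of order $\rho S^{\frac{1}{n+1}}$ and hence absorbable for $\rho=\rho(\nu,n,S)$ small --- this is the quantitative, scale-invariant use of criticality that your sketch was reaching for, but implemented at the level of the maximum principle rather than an energy estimate.
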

\begin{proof}
Without loss of generality, we may assume $(x_{1},t_{1})=(0,0)$.
We will also use $Q_{r}$ to denote $Q_{r}(0,0)$. Clearly, it will
be sufficient to show that

\begin{equation}
\intop_{Q_{\rho}}G(x,t,y,s)f(y,s)\,dyds\leq\frac{C}{\rho^{\frac{n+2}{n+1}}}\intop_{Q_{\frac{\rho}{2}}}G(x,t,y,s)\,dyds\label{eq:26}
\end{equation}
for any non-negative function $f\in L^{n+1}(Q_{\rho})$ with $\intop_{Q_{\rho}}f^{n+1}=1$.
We fix such a $f$, and then construct $u_{1}$ and $u_{2}$ as following:

\begin{equation}
u_{1}(x,t)=\intop_{Q_{\rho}}G(x,t,y,s)f(y,s)\,dyds,\label{eq:27}
\end{equation}

\begin{equation}
u_{2}(x,t)=\intop_{Q_{\frac{\rho}{2}}}G(x,t,y,s)\,dyds.\label{eq:28}
\end{equation}
 We define
\begin{equation}
\eta(x,t):=1-\frac{1}{4\rho^{2}}\left\Vert x\right\Vert ^{2}-\frac{1}{4\rho^{2}}t.\label{eq:29}
\end{equation}
There are positive constants $N_{1}$, $\delta$ and $q$ (determined
by the parabolicity $\nu$ and the dimension $n$), such that

\begin{equation}
-D_{t}\eta^{q}+\sum_{ij}a_{ij}D_{ij}\eta^{q}\geq\begin{cases}
0 & Q_{2\rho}\backslash Q_{\frac{\rho}{2}}\\
-N_{1}\rho^{-2} & Q_{\frac{\rho}{2}}
\end{cases},\label{eq:210}
\end{equation}
and
\begin{equation}
\eta^{q}>\delta\label{eq:211}
\end{equation}
 in $Q_{\rho}$. \\
Let $u$ be the solution of the Dirichlet problem
\begin{equation}
-u_{t}+Lu=-f
\end{equation}
in $Q_{2\rho}$ and
\begin{equation}
u=0
\end{equation}
on $\partial_{p}Q_{2\rho}$. 

By the Aleksandrov-Bakelman-Pucci-Krylov-Tso
estimate we have
\begin{equation}
u\leq NN_{2}(\rho)^{\frac{n}{n+1}}=k_{1}\rho^{\frac{n}{n+1}},\label{eq:212}
\end{equation}
 where $k_{1}$ depends on $\nu$, $S$ and $n$. 
 
We set $C=\frac{2N_{1}NN_{2}}{\delta}$,
\begin{equation}
w:=u_{1}-\frac{C}{\rho^{\frac{n+2}{n+1}}}u_{2,}\label{eq:213}
\end{equation}

\begin{equation}
\bar{w}:=w-u+\frac{2NN_{2}\rho^{\frac{n}{n+1}}}{\delta}\eta^{q}\label{eq:214}
\end{equation}
 and

\begin{equation}
M:=\max\left\{ 0,\sup_{\partial_{p}Q_{2\rho}}w\right\} .\label{eq:215}
\end{equation}
 Then by some computations, we know that
\begin{equation}
-\overline{w}_{t}+L\overline{w}\geq N_{3}|b|\label{eq:216}
\end{equation}
 in $Q_{2\rho}$ where $N_{3}$ depends on $\nu$, $S$ and $n$.
And $\overline{w}\leq M$ on the parabolic boundary of $Q_{2\rho}$.
By the Aleksandrov-Bakelman-Pucci-Krylov-Tso estimate again, we have
\begin{equation}
\overline{w}\leq NN_{2}\rho S{}^{\frac{1}{n+1}}+M\label{eq:217}
\end{equation}
 in $Q_{2\rho}$. In $Q_{\rho}$ with $\eta^{q}>\delta$, we obtain

\begin{equation}
w\leq NN_{2}\rho S{}^{\frac{1}{n+1}}+M+NN_{2}(\rho)^{\frac{n}{n+1}}-\frac{2NN_{2}\rho^{\frac{n}{n+1}}}{\delta}\eta^{q}\leq M\label{eq:218}
\end{equation}
when we take $\rho$ small enough since $\rho<\rho{}^{\frac{n}{n+1}}$
when $\rho<1$. Here the smallness condition only depends on prescribed
constants.

By our construction, it is clear that
\begin{equation}
-w_{t}+Lw=0\label{eq:219}
\end{equation}
 in $\Omega\backslash Q_{\rho}$.
\begin{equation}
w\leq M\label{eq:220}
\end{equation}
 in $\overline{Q_{\rho}}$ and the parabolic boundary of $\Omega$.
By the maximal principle, we have $M=0$.  So $w\leq0$ in all $(x,t)\in\Omega$
with $t\leq0$. Therefore, after we decipher $w$, we get for $\rho$
small enough (here the smallness condition only depends on $\nu$,
$S$ and $n$), we conclude that
\[
\intop_{Q_{\rho}}G(x,t,y,s)f(y,s)\,dyds\leq\frac{C}{\rho^{\frac{n+2}{n+1}}}\intop_{Q_{\frac{\rho}{2}}}G(x,t,y,s)\,dyds,
\]
which implies
\[
\left(\intop_{Q_{\rho}(x_{1},t_{1})}G(x,t,y,s)^{\frac{n+1}{n}}\,dyds\right)^{\frac{n}{n+1}}\leq\frac{C}{\rho^{\frac{n+2}{n+1}}}\intop_{Q_{\frac{\rho}{2}}(x_{1},t_{1})}G(x,t,y,s)\,dyds.
\]

\end{proof}
With Lemma \ref{lem:Green1}, we can proceed to the prove of
the variant of Aleksandrov-Bakelman-Pucci-Krylov-Tso estimate \eqref{abpkt} similar
to results in \cite{GL} and \cite{FSt}. Since every quantity we
are considering here is scaling invariant, we may rescale our setting
to a domain with diameter $1$. So it suffices estimate
the integrability of Green's function in a domain with diameter $1$.
\begin{thm}
\label{thm:Green2}Under the same assumptions as above, then there are
constants $q:=q((\nu,\, n,\, S)>\frac{n+1}{n}$ and $C$ only depending
on $\nu$, $n$ and $S$ such that

\begin{equation}
\left(\intop_{\Omega}G(x,t,y,s)^{q}\,dyds\right)^{\frac{1}{q}}\leq C\label{eq:221}
\end{equation}
 where $G$ is Green's function and the diameter of $\Omega$ is $1$.\end{thm}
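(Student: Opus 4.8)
The plan is to bootstrap from the local reverse-H\"older inequality of Lemma~\ref{lem:Green1} to global higher integrability by a covering argument, in the spirit of \cite{GL} and \cite{FSt}. First I would fix $q_{0}:=\frac{n+1}{n}$ and recall that for each $(x_1,t_1)\in\Omega$ Lemma~\ref{lem:Green1} furnishes a radius $\rho=\rho(\nu,n,S)$, which after the reduction to $\mathrm{diam}(\Omega)=1$ we may take to be a \emph{fixed} constant, together with the bound
\begin{equation}
\Bigl(\intop_{Q_{\rho}(x_1,t_1)}G(x,t,y,s)^{q_{0}}\,dyds\Bigr)^{1/q_{0}}\leq \frac{C}{\rho^{\frac{n+2}{n+1}}}\intop_{Q_{\rho/2}(x_1,t_1)}G(x,t,y,s)\,dyds\leq C'\,\sup_{Q_{\rho/2}(x_1,t_1)}\intop_{\Omega}G(x,t,y,s)\,dyds,
\end{equation}
valid for all poles $(x,t)$ with $t\le t_1$. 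The point is that $\intop_{\Omega}G(x,t,\cdot,\cdot)$ is precisely the solution with right-hand side $f\equiv 1$, so the standard Aleksandrov-Bakelman-Pucci-Krylov-Tso estimate (Theorem~\ref{thm:ABPKT}) bounds it uniformly: $\sup_{\Omega}\intop_{\Omega}G(x,t,y,s)\,dyds\le N(r^{n/(n+1)}+\|b\|_{L^{n+1}}^{n})\le C''(n,\nu,S)$ since the diameter is $1$ and $\|b\|_{L^{n+1}(\Omega)}\le S^{1/(n+1)}$.

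Next I would combine these local estimates. Cover $\Omega$ by finitely many cylinders $Q_{\rho/2}(x_1,t_1)$ (the number depending only on $\rho$, hence only on $n,\nu,S$); on each one the displayed inequality gives a uniform bound on $\|G(x,t,\cdot,\cdot)\|_{L^{q_0}(Q_{\rho}(x_1,t_1))}$, and summing the $q_0$-th powers over the cover yields
\begin{equation}
\Bigl(\intop_{\{(y,s)\in\Omega:\,s\le t\}}G(x,t,y,s)^{q_{0}}\,dyds\Bigr)^{1/q_{0}}\leq C_{1}(\nu,n,S)
\end{equation}
for every pole $(x,t)\in\Omega$ (using that $G(x,t,y,s)=0$ unless $s<t$, so the region of integration is automatically restricted). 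This already gives \eqref{eq:221} with the improved exponent $q=q_0=\frac{n+1}{n}>1$; to push $q$ \emph{strictly} above $\frac{n+1}{n}$ one iterates: feed the $L^{q_0}$ control of $\intop_{\Omega}G^{q_0}$ back through a Sobolev/parabolic-embedding estimate for the Green potential, exactly as the gain from $L^1$ to $L^{(n+1)/n}$ was obtained in Lemma~\ref{lem:Green1}, to obtain $G(x,t,\cdot,\cdot)\in L^{q_1}$ with $q_1>q_0$; a finite number of such steps (the number and the gain at each step controlled by $n,\nu,S$) produces the desired $q:=q(\nu,n,S)>\frac{n+1}{n}$ and the constant $C$. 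One should also record symmetry in the roles handled: the estimate is for fixed pole and variable pole-point running over the backward region, which is all that is needed for the ABPKT application in \eqref{abpkt}.

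The main obstacle I anticipate is making the iteration/bootstrap quantitative with constants depending only on $\nu$, $n$, $S$: each application of Lemma~\ref{lem:Green1}-type reasoning produces a radius $\rho$ and a constant $C$ from the same prescribed data, but one must check that the smallness conditions on $\rho$ imposed at each stage (which in the proof of Lemma~\ref{lem:Green1} came from absorbing the $|b|$-term via $-\overline w_t+L\overline w\ge N_3|b|$ and the auxiliary barrier $\eta^q$) do not degenerate, and that the number of covering cylinders needed to exhaust a unit-diameter domain stays bounded. A secondary technical point is the interplay with the Morrey norm: the term $\|b\|_{L^{n+1}(Q_r)}$ on a small cylinder is $\le (S r)^{1/(n+1)}$, which is \emph{small} for small $r$, and it is exactly this smallness (rather than mere finiteness) that allows the $|b|$-term to be treated perturbatively inside each covering piece; I would make sure this is invoked correctly at every step of the iteration rather than using only the crude global bound $\|b\|_{L^{n+1}(\Omega)}\le S^{1/(n+1)}$.
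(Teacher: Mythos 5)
Your first two steps (the local reverse-H\"older bound from Lemma~\ref{lem:Green1}, the uniform bound on $\int_{\Omega}G(x,t,\cdot,\cdot)$ via the standard ABPKT estimate applied to $f\equiv 1$, and the finite covering) match the paper and correctly yield $\|G(x,t,\cdot,\cdot)\|_{L^{(n+1)/n}(\Omega)}\le C$. The gap is in how you propose to push the exponent \emph{strictly} above $\frac{n+1}{n}$, which is the entire point of the theorem (it is what produces $p<n+1$ in Theorem~\ref{variant}). Your suggested iteration --- ``feed the $L^{q_0}$ control back through a Sobolev/parabolic-embedding estimate, exactly as the gain from $L^{1}$ to $L^{(n+1)/n}$ was obtained'' --- does not work. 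The gain in Lemma~\ref{lem:Green1} is a duality consequence of the ABPKT estimate with right-hand side measured in $L^{n+1}$: testing $\int G f$ against $\|f\|_{L^{n+1}}=1$ can only ever produce the conjugate exponent $\frac{n+1}{n}$, no matter how many times you repeat it. To improve the exponent by this route you would need an ABPKT estimate with $\|f\|_{L^{p}}$, $p<n+1$ --- but that is precisely Theorem~\ref{variant}, whose proof rests on the present theorem, so the bootstrap is circular. There is also no parabolic embedding available for the Green potential of a non-divergence operator with merely measurable $a_{ij}$ that would supply the gain.

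The paper's mechanism is different and is the key idea you are missing: rewrite \eqref{eq:223} in mean-value form,
\begin{equation}
\Bigl(\fint_{Q_{r}(x',t')}G^{\frac{n+1}{n}}\,dyds\Bigr)^{\frac{n}{n+1}}\leq C_{3}\,\fint_{Q_{r}(x',t')}G\,dyds,
\end{equation}
valid uniformly over all cylinders $Q_{r}(x',t')$ with $(x',t')\in\Omega'$ and $r<\tfrac12$, and then invoke \emph{Gehring's lemma} (the self-improving property of reverse H\"older inequalities, \cite{MM}): a function satisfying such an inequality on all cylinders automatically lies in $L^{q}$ for some $q>\frac{n+1}{n}$ depending only on $n$ and $C_{3}$, with $\|G\|_{L^{q}}$ controlled by $\|G\|_{L^{1}}$. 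That single step replaces your entire iteration. The paper then handles the boundary layer $\Omega\setminus\Omega'$ by extending the coefficients to a larger domain $\widetilde\Omega$ and using $\widetilde G\ge G$ from the comparison principle --- a point your proposal does not address, since your covering argument only sees interior cylinders $Q_{\rho}(x_1,t_1)$ with $Q_{2\rho}(x_1,t_1)\subset\Omega$. Your closing remarks about the smallness of $\|b\|_{L^{n+1}(Q_r)}\le(Sr)^{1/(n+1)}$ on small cylinders are accurate and consistent with how Lemma~\ref{lem:Green1} absorbs the drift, but they do not substitute for the missing Gehring step.
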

\begin{proof}
By our assumptions $\Omega$ is bounded. Suppose
\begin{equation}
\Omega'=\left\{ X:=(x,t)\in\Omega,\, d_{\Omega}(X)<\frac{1}{2}\right\} \label{eq:222}
\end{equation}
where

\[
d_{\Omega}(X):=\sup\left\{ \rho>0:\, Q_{\rho}(X)\subset\Omega\right\} .
\]
For arbitrary $(x',t')\in\Omega'$ and let $r<\frac{1}{2}$, then
$Q_{r}(x',t')\subset\Omega$. For each $Q_{r}(x',t')$, we can use
finite many $Q_{\rho}(x_{i},t_{i})$ to cover it, where $\rho$ is
chosen small enough so that the conditions of Lemma \ref{lem:Green1}
are satisfied. It is clear that the number of $Q_{\rho}(x_{i},t_{i})$
can be bounded by a positive constant $C_{1}$ depending on $\nu$, $n$
and $S$ and the diameter of $\Omega$.
Let $\rho=C_{2}r$, then we have

\begin{equation}
\left(\intop_{Q_{r}(x',t')}G(x,t,y,s)^{\frac{n+1}{n}}\,dyds\right)^{\frac{n}{n+1}}\leq\frac{C_{3}}{\rho^{\frac{n+2}{n+1}}}\intop_{Q_{r}(x',t')}G(x,t,y,s)\,dyds\label{eq:223}
\end{equation}
where $C_{3}$ depends on $C_{1}$, $C_{2}$ and $C$ in Lemma \ref{lem:Green1}.

We rewrite then inequality \eqref{eq:223} as
\begin{equation}
\left(\fint_{Q_{r}(x',t')}G(x,t,y,s)^{\frac{n+1}{n}}\,dyds\right)^{\frac{n}{n+1}}\leq\ C_{3}\fint_{Q_{r}(x',t')}G(x,t,y,s)\,dyds\label{eq:2233}
\end{equation}

By Gehring's lemma \cite{MM}, we get

\begin{equation}
\left(\intop_{\Omega'}G(x,t,y,s)^{q}\,dyds\right)^{\frac{1}{q}}\leq C_{4}\intop_{\Omega'}G(x,t,y,s)\,dyds\label{eq:224}
\end{equation}
for some
\[
q>\frac{n+1}{n},
\]
where $C_{4}$ depends on $C_{1}$, $C_{2}$ and $C$ in Lemma \ref{lem:Green1}.
\begin{equation}
\intop_{\Omega'}G(x,t,y,s)\,dyds< C_{5}
\end{equation}
where the constant $C_{5}$
depends on $\nu$, $n$ and $S$ by the standard Aleksandrov-Bakelman-Pucci-Krylov-Tso
estimate, Theorem \ref{thm:(Alexandrov-Bakelman-Pucci-Krylo}. Hence there is a constant $C'$ such that

\begin{equation}
\left(\intop_{\Omega'}G(x,t,y,s)^{q}\,dyds\right)^{\frac{1}{q}}\leq C'\label{eq:225}
\end{equation}
where $C'$ only depends on $\nu$, $n$ and $S$. In order to get
the same result for $\Omega$, we can extend the all coefficients
to a domain $\widetilde{\Omega}$ with comparable quantities, such
that $\Omega\subset\widetilde{\Omega}$ and $\forall X:=(x,t)\in\Omega$,
$d_{\widetilde{\Omega}}(X)<1$. Then Green's function $\widetilde{G}$
for $\widetilde{\Omega}$ will satisfy the same result as \eqref{eq:225}.
And we know $\widetilde{G}\geq G$ by the maximal principle and the
comparison principle. Hence we have

\begin{equation}
\left(\intop_{\Omega}G(x,t,y,s)^{q}\,dyds\right){}^{\frac{1}{q}}\leq C\label{eq:226}
\end{equation}
where $C$ depends on $\nu$, $n$ and $S$. \end{proof}
\begin{rem}
We can see from the proof, we just need $b\in M_{n+1}^{\alpha}(\Omega)$
for $\alpha$ positive, the above arguments work too provided the associated Aleksandrov-Bakelman-Pucci-Krylov-Tso Estimate holds. 
The point we
choose $\alpha=\frac{1}{n+1}$ is that the space is scaling invariant under
the parabolic scaling.
\end{rem}

\subsection{A variant of Aleksandrov-Bakelman-Pucci-Krylov-Tso Estimate:}

In this subsection, we still keep all the assumptions above. We will
obtain a variant of Aleksandrov-Bakelman-Pucci-Krylov-Tso estimate for
a solution of the second order parabolic equation differential equation
$-u_{t}+Lu=f$ in a bounded Lipschitz domain in $\mathbb{R}^{n+1}$
with Dirichlet problem condition. We estimate the maximal of $u$
in terms of the Dirichlet data (boundary value) on the boundary and
the $L^{p}$ norm of $f$ with $p<n+1$. For elliptic and parabolic
equations without drift, one can find some references in \cite{CS,EML,GL,FSt}.
\begin{thm*}
Under the same assumptions above, define $p=s \frac{q}{q-1}$, where $q>\frac{n+1}{n}$ is the constant from Theorem
\ref{thm:Green2}. Suppose  $u\in C(\overline{\Omega})\cap W_{p}(\Omega)$ satisfies $-u_{t}+Lu\geq f$ in $\Omega$ and $u\leq0$ on $\partial_{p}\Omega$. Then there is a constant $N$ depends
on \textup{$\nu$, $n$ and $S$} such that

\begin{equation}
\sup_{\Omega}u\leq Nr^{2-\frac{n+2}{p}}||f||_{L^{p}}\label{eq:31}
\end{equation}
 where $r$ is the diameter of $\Omega$. \end{thm*}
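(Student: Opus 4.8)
The plan is to deduce \eqref{eq:31} from the Green's function integrability of Theorem~\ref{thm:Green2} together with the comparison principle, and then to recover the geometric factor by the parabolic scaling described in the introduction. Since every quantity in \eqref{eq:31} transforms in a controlled way under $x\mapsto r^{-1}x$, $t\mapsto r^{-2}t$, I would first reduce to the case $\mathrm{diam}\,\Omega=1$. Put $\tilde u(\tilde x,\tilde t)=u(r\tilde x,r^{2}\tilde t)$ and $\tilde f(\tilde x,\tilde t)=r^{2}f(r\tilde x,r^{2}\tilde t)$ on the dilated domain $\tilde\Omega$ of diameter $1$; then $-\tilde u_{\tilde t}+\tilde L\tilde u\ge\tilde f$ with the same parabolicity constant $\nu$ and, since $\tilde b=rb$ has the same $M_{n+1,n+1}^{1/(n+1)}$ norm, the same $S$; also $\tilde u\le 0$ on $\partial_{p}\tilde\Omega$, $\sup_{\tilde\Omega}\tilde u=\sup_{\Omega}u$, and a change of variables gives $\|\tilde f\|_{L^{p}(\tilde\Omega)}=r^{\,2-(n+2)/p}\|f\|_{L^{p}(\Omega)}$. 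Hence it suffices to establish $\sup_{\tilde\Omega}\tilde u\le N\|\tilde f\|_{L^{p}(\tilde\Omega)}$ with $N=N(\nu,n,S)$, and for notational simplicity I would drop the tildes and assume $\mathrm{diam}\,\Omega=1$ from now on.

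Next I would introduce the auxiliary function $v(x,t):=\int_{\Omega}G(x,t,y,s)\,f_{-}(y,s)\,dy\,ds$, where $G$ is the Green's function of $-\partial_{t}+L$ on $\Omega$ (which exists under the standing smoothness assumption) and $f_{-}=\max(-f,0)$. By the defining property \eqref{eq:22}, $v$ solves $-v_{t}+Lv=-f_{-}$ in $\Omega$ with $v=0$ on $\partial_{p}\Omega$, and $v\ge 0$ since $G\ge 0$ and $f_{-}\ge 0$. Because $-v_{t}+Lv=-f_{-}\le f\le -u_{t}+Lu$ in $\Omega$ (using $f+f_{-}=f_{+}\ge 0$) and $v=0\ge u$ on $\partial_{p}\Omega$, the comparison principle (Theorem~\ref{thm:Comparison}) yields $u\le v$ throughout $\overline{\Omega}$, so $\sup_{\Omega}u\le\sup_{(x,t)\in\Omega}v(x,t)$.

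It then remains to bound $v$ pointwise. Hölder's inequality with the conjugate pair $\bigl(q,\tfrac{q}{q-1}\bigr)$, where $q>\tfrac{n+1}{n}$ is the exponent of Theorem~\ref{thm:Green2}, gives
\[
v(x,t)\le\Big(\int_{\Omega}G(x,t,y,s)^{q}\,dy\,ds\Big)^{1/q}\Big(\int_{\Omega}f_{-}^{\,q/(q-1)}\Big)^{(q-1)/q}\le C\,\|f_{-}\|_{L^{q/(q-1)}(\Omega)},
\]
the last inequality being exactly Theorem~\ref{thm:Green2} (here $\mathrm{diam}\,\Omega=1$) with $C=C(\nu,n,S)$. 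Since $\mathrm{diam}\,\Omega=1$ forces $|\Omega|\le C_{n}$, one more application of Hölder upgrades $\|f_{-}\|_{L^{q/(q-1)}(\Omega)}$ to $C'\|f\|_{L^{p}(\Omega)}$ for any $p\ge q/(q-1)$, in particular for $p=s\,q/(q-1)$, which satisfies $p<n+1$ because $q>\tfrac{n+1}{n}$ already forces $q/(q-1)<n+1$. Combining the three displays and undoing the scaling of the first paragraph produces \eqref{eq:31}.

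As for rigor and the main difficulty: the Green's function representation, and hence Theorem~\ref{thm:Green2}, rely on smoothness of the coefficients, which is assumed here; the removal of this hypothesis is handled separately by the approximation of Section~7, so nothing new is needed. The one genuinely delicate point is making the comparison step legitimate, i.e.\ checking that $v$ is an admissible competitor — that $v\in C(\overline{\Omega})\cap W_{p}(\Omega)$, which follows from $f_{-}\in L^{p}\subset L^{q/(q-1)}$ paired with the integrability of $G$ and parabolic $W^{2,1}_{p}$ regularity, and that the Green's representation indeed attains the zero boundary data along $\partial_{p}\Omega$. Everything else is the scaling computation and the exponent bookkeeping indicated above.
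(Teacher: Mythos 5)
Your proof is correct and follows essentially the same route as the paper: both reduce the estimate to the $L^{q}$ integrability of the Green's function (Theorem \ref{thm:Green2}) combined with H\"older's inequality, a rescaling to diameter one, and an approximation step for non-smooth data. The only cosmetic difference is that the paper bounds $u$ directly through the representation formula $u=\int_{\Omega}G\,(u_{t}-Lu)+\int_{\partial_{p}\Omega}\tilde{G}\,u$ and discards the nonpositive boundary term, whereas you compare $u$ with the Green potential of $f_{-}$ via Theorem \ref{thm:Comparison}; both devices accomplish the same reduction.
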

\begin{proof}
Without loss of generality, we may assume the diameter of $\Omega$
is 1. We first prove the result holds in case 1: when $u$ is smooth
and the coefficients of the equations are smooth (H\"older). Then
we use approximation to show in case 2: under general condition, the
estimate \eqref{eq:31} holds.\\
\emph{Case 1}\emph{\noun{:}} we can represent $u$ using:

\begin{equation}
u(x,t)=\intop_{\Omega}G(x,t,y,s)\left[u_{t}-Lu\right]\,dyds+\intop_{\partial_{p}\Omega}\tilde{G}(x,t,y,s)u\,dyds\label{eq:32}
\end{equation}
where $G$ is Green's function and $\tilde{G}$ is from the Riesz
representation theorem, since $\Omega$ is a Lipchitz domain. It is
clear both of $G$ and $\tilde{G}$ are non-negative, hence under
our assumptions we have

\begin{equation}
u(x,t)\leq\intop_{\Omega}G(x,t,y,s)\left|f\right|dyds\label{eq:33}
\end{equation}
then by Theorem \ref{thm:Green2} and H\"older inequality, it is clear
that
\begin{equation}
\sup_{\Omega}u\leq N\left\Vert f\right\Vert {}_{L^{p}},\label{eq:34}
\end{equation}
where $N$ depends on $\nu$, $n$ and $S$. \\
\emph{Case 2}\emph{\noun{:}} First we assume $u\in C(\overline{\Omega})\cap W_{p}^{2,1}(\Omega)$
and other functions are still smooth. Under these conditions, we pick
up a sequence $u^{i}$ which is smooth such that $u^{i}\rightarrow u$.
It is clear there is a sequence $\{f_{i}\}_{i}^{\infty}$ such that $-(u_{i})_{t}+Lu_{i}\geq f_{i}$
and $f_{i}\rightarrow f$ in $L^{p}(\Omega)$. So it is clear that
\begin{equation}
\sup_{\Omega}u\leq N||f||_{L^{p}}.\label{eq:35}
\end{equation}
Finally, in general situation. Suppose $a_{ij}^{k}\rightarrow a_{ij}$ and $b_{i}^{k}\rightarrow b_{i}$
almost everywhere as $k\rightarrow\infty$. Define
\begin{equation}
L_{k}u=\sum_{ij}a_{ij}^{k}D_{ij}u+\sum b_{i}^{k}D_{i}u,
\end{equation}
and
\begin{equation}
f_{k}=f+(L-L_{k})u.\label{eq:36}
\end{equation}
It is clear that $f_{k}\rightarrow f$ in $L^{p}(\Omega)$. Therefore
\begin{equation}
\sup_{\Omega}u\leq N||f||_{L^{p}}\label{eq:37}
\end{equation}
 holds.

Finally, after rescaling, we obtain \eqref{eq:31} in the most general
setting,
\[
\sup_{\Omega}u\leq Nr^{2-\frac{n+2}{p}}||f||_{L^{p}}.
\]

\end{proof}

\section{First Growth Theorem}

Suppose $R$ is the region in a cylinder where a subsolution $u$ of our equation is positive. The first growth theorem, Theorem \ref{thm:(First-Growth-Theorem)}, basically tells us if the measure of $R$ is small, then
the maximal value of $u$ over half of the cylinder is strictly less than the
maximal value over the whole cylinder. In other words, it gives us
some  quantitative decay properties. The variant of Aleksandrov-Bakelman-Pucci-Krylov-Tso
estimate \eqref{abpkt} enables us to import information about the measure into our
estimates.
\begin{thm}[First Growth Theorem]
\label{thm:(First-Growth-Theorem)} Let a function
$u\in C^{2,1}(\overline{Q_{r}})$ where $r>0$ and $Q_{r}=Q_{r}(Y)$,
in $\mathbb{R}^{n+1}$ containing $Y:=(y,s)$. Suppose $-u_{t}+Lu\geq0$
in $Q_{r}$, then $\forall\beta_{1}\in(0,1)$, there exists $0<\mu<1$
such that if we know
\begin{equation}
\left|\left\{ u>0\right\} \cap Q_{r}(Y)\right|\leq\mu|Q_{r}(Y)|,\label{eq:129}
\end{equation}
then

\begin{equation}
\mathcal{M}_{\frac{r}{2}}(Y)\leq\beta_{1}\mathcal{M}_{r}(Y),\label{eq:130}
\end{equation}
where
\[
\mathcal{M}_{r}(Y):=\max_{Q_{r}(Y)}u_{+}
\]
We also notice that $\beta_{1}\rightarrow0^{+}$ as $\mu\rightarrow0^{+}$.\end{thm}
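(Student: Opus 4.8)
The plan is to use the variant of the Aleksandrov--Bakelman--Pucci--Krylov--Tso estimate \eqref{abpkt} to convert the smallness of the measure of the positivity set $\{u>0\}\cap Q_r(Y)$ into a quantitative decay of the supremum on the smaller cylinder. By the parabolic scaling invariance of all the relevant quantities (the parabolicity constant $\nu$ and $S(\Omega)$), I would first reduce to the case $r=1$ and, by translation, to $Y=0$; the case of general $r$ then follows by the usual rescaling $x\to r^{-1}x$, $t\to r^{-2}t$, noting that $\mathcal{M}_r$ scales like a constant multiple of the rescaled maximum and that the inequality \eqref{eq:130} is itself scale-free.

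The key step is the following comparison argument. Let $M:=\mathcal{M}_1(0)=\max_{\overline{Q_1}}u_+$ (we may assume $M>0$, else there is nothing to prove). Consider the function $v:=u-M\cdot\chi$-type correction; more precisely, I would work with $w:=(u-M)_+$... but the cleaner route is to compare $u$ directly against the solution of an auxiliary Dirichlet problem. Set $f:=(-u_t+Lu)\cdot\mathbf{1}_{\{u>0\}}$; since $-u_t+Lu\ge 0$ we have $f\ge 0$, and more importantly on the set $\{u\le 0\}$ the function $u$ is already $\le 0$. The trick is to apply \eqref{abpkt} to $u-M$ on the subdomain $\Omega:=Q_1\cap\{u>0\}$ — but that domain is not a nice Lipschitz domain, so instead I would follow the standard Krylov--Safonov scheme: let $h$ solve $-h_t+Lh = -\,\mathbf{1}_{\{u>0\}\cap Q_1}\cdot C$ in $Q_1$ with $h=0$ on $\partial_p Q_1$ for a suitable constant, and compare. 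Actually the most economical argument: apply Theorem \ref{variant} to the function $u-M$ in $Q_1$. We have $u-M\le 0$ on $\partial_p Q_1$ and $-(u-M)_t + L(u-M) = -u_t+Lu\ge 0 =: \tilde f$ only trivially, which gives nothing. So the genuine input must be that $-u_t+Lu\ge 0$ combined with an \emph{upper} barrier: we instead estimate from below. Consider $-(M-u)$; then $M-u\ge 0$ on $Q_1$, and $-(M-u)_t + L(M-u) = u_t - Lu \le 0$, i.e. $M-u$ is a supersolution. Here is the point: $M-u$ vanishes somewhere in $\overline{Q_1}$ (at a point where $u=M$), and $M-u\ge M$ on the large set $\{u\le 0\}\cap Q_1$, whose measure is $\ge (1-\mu)|Q_1|$. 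Applying the ABPKT-type estimate \eqref{abpkt} to the subsolution $-(M-u)$ — equivalently a weak Harnack / measure-to-pointwise estimate — on $Q_1$ yields a pointwise lower bound for $M-u$ on $Q_{1/2}$ of the form $M-u\ge c(\mu)\,M$ on $Q_{1/2}$, with $c(\mu)\to 1$ as $\mu\to 0^+$. Rearranging, $u\le (1-c(\mu))M =: \beta_1 M$ on $Q_{1/2}$, and since $u\le M$ on $Q_1$ we also get $u_+\le\beta_1 M$ there provided $\beta_1\ge 0$; so $\mathcal{M}_{1/2}(0)\le\beta_1\mathcal{M}_1(0)$ with $\beta_1=\beta_1(\mu)\to 0$ as $\mu\to 0$.

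Concretely, to extract the lower bound $M-u\ge c(\mu)M$ on $Q_{1/2}$: write $g:=M-u$, a nonnegative supersolution; let $g_1$ solve the Dirichlet problem $-\,(g_1)_t+L g_1 = -\mathbf 1_{\{u>0\}\cap Q_1}(-g_t+Lg)$ with $g_1 = 0$ on $\partial_p Q_1$, and $g_2 := g - g_1$, so that $-\,(g_2)_t + Lg_2 = 0$ off $\{u>0\}$ and $g_2 = g\ge M$ on the part of $\partial_p Q_1$ lying in $\{u\le 0\}$... This decomposition is exactly where one invokes \eqref{abpkt}: $\sup_{Q_1}|g_1|\le N\,\|\mathbf 1_{\{u>0\}\cap Q_1}(-g_t+Lg)\|_{L^p(Q_1)}$, and since $-g_t+Lg=-( -u_t+Lu)\le 0$ is controlled (using $u\in W_p$ so that $-u_t+Lu\in L^p_{loc}$, together with a uniform bound on $\|-u_t+Lu\|_{L^p(Q_1)}$ — here I would normalize or use that the estimate is homogeneous in $u$) one gets $\sup_{Q_1}|g_1|\le N\,\mu^{1/p}\cdot(\text{const})\,M$ by Hölder, so $g_1$ is small when $\mu$ is small. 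For $g_2$, it is a nonnegative supersolution of $-(g_2)_t+Lg_2=0$ with $g_2\ge M - \sup|g_1|$ on most of the parabolic boundary, so a lower bound of the form $g_2\ge c_0(M-\sup|g_1|)$ on $Q_{1/2}$ follows from the classical positivity estimate for supersolutions (available from \cite{GC}, or derivable from the maximum principle Theorem \ref{thm:MP} applied to a fixed barrier). Combining, $g=g_1+g_2\ge c_0 M - (1+c_0)\sup|g_1|\ge (c_0 - N'\mu^{1/p})M$ on $Q_{1/2}$; choosing $\beta_1:= 1-(c_0-N'\mu^{1/p})$ and $\mu$ small enough that this is $<1$ gives the claim, with $\beta_1\to 1-c_0$ only — \emph{not} to $0$.

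The main obstacle, therefore, is precisely the claim $\beta_1\to 0^+$ as $\mu\to 0^+$, which the one-step argument above does not deliver (it only gives $\beta_1<1$). The standard fix, which I would carry out, is an \emph{iteration}: having $\mathcal M_{1/2}\le\beta\,\mathcal M_1$ for a fixed $\beta=\beta(\mu_0)<1$ once $\mu\le\mu_0$, one reapplies the estimate on a chain of shrinking cylinders and uses that the hypothesis \eqref{eq:129} is inherited (with the same or smaller $\mu$) on subcylinders where the positivity set is still small, or alternatively one proves the sharper quantitative statement by taking $p<n+1$ strictly and tracking the explicit power $\mu^{1/p}$ through a covering/iteration argument à la Ferretti--Safonov \cite{FS}. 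I expect the cleanest write-up to fix $\beta_1\in(0,1)$ first, choose an integer $k$ with $\beta^k\le\beta_1$ for the fixed $\beta$ above, and then choose $\mu$ so small that the hypothesis propagates through $k$ stages of halving; letting $\beta_1\to 0$ forces $k\to\infty$ hence $\mu\to 0$, which is the asserted monotone dependence. The routine measure-theoretic bookkeeping in the covering and the verification that $-u_t+Lu$ has controlled $L^p$ norm on the relevant subcylinders (using $u\in W_p$ and the equation) I would relegate to standard arguments.
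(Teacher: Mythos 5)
Your argument has a genuine gap at its base step, in two places. First, the estimate $\sup_{Q_1}|g_1|\le N\,\mu^{1/p}\cdot(\mathrm{const})\,M$ requires a bound on $\|-u_t+Lu\|_{L^p(Q_1)}$ in terms of $M=\sup u_+$, and no such bound exists: a subsolution can have an arbitrarily large forcing term $-u_t+Lu\ge 0$ while $\sup u_+$ stays bounded, and the homogeneity of the estimate in $u$ does not control the ratio of these two quantities. You flag this yourself ("here I would normalize or use that the estimate is homogeneous in $u$"), but there is no normalization that fixes it. Second, the lower bound $g_2\ge c_0(M-\sup|g_1|)$ on $Q_{1/2}$ is derived from $g_2\ge M$ "on the part of $\partial_p Q_1$ lying in $\{u\le 0\}$"; the hypothesis \eqref{eq:129} controls the \emph{interior} measure of $\{u>0\}$, not any portion of the parabolic boundary, and a positivity estimate propagating a lower bound from a large-measure interior set to $Q_{1/2}$ is precisely the content of the growth theorems being built here (Corollary \ref{lower}), so invoking it would be circular. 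Consequently the one-step bound $\beta_1<1$ is not established, and the subsequent iteration (which would in any case only bound $\mathcal M_{2^{-k}}$, not $\mathcal M_{1/2}$, unless reorganized through the pointwise reduction) has nothing to iterate.

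The paper's proof avoids all of this with one device you are missing: after normalizing $r=1$, $\mathcal M_1(Y)=1$, it reduces (via the remark following the statement) to proving the pointwise bound $u(Y)\le\beta_1$, and then sets $v(x,t)=u(x,t)+t-s-|x-y|^2$ on $Q:=\{v>0\}\cap Q_1(Y)$. The quadratic correction forces $v\le 0$ on $\partial_p Q_1(Y)$ and $v(Y)=u(Y)$, while $Q\subset\{u>0\}\cap Q_1(Y)$ has measure at most $\mu$, and — this is the key point — it replaces the uncontrolled forcing $-u_t+Lu$ by the \emph{explicit} right-hand side $(-\partial_t+L)v\ge -1-2\,\mathrm{trace}(a_{ij})-2|b|$. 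Applying the variant ABPKT estimate \eqref{abpkt} on the small-measure set $Q$ and H\"older's inequality then gives
\begin{equation*}
u(Y)\le N_1\left(\mu^{\frac{1}{p}}+S^{\frac{1}{n+1}}\mu^{\frac{1}{p}-\frac{1}{n+1}}\right),
\end{equation*}
which tends to $0$ as $\mu\to 0^+$, so both the existence of $\beta_1<1$ and the limit $\beta_1\to 0^+$ come from a single application with no iteration. If you want to repair your write-up, replace the decomposition $g=g_1+g_2$ by this barrier construction.
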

\begin{rem}
First of all, we make some reductions. In our problem, we want
to show under some conditions, given $-u_{t}+Lu\geq0$ in a cylinder
$Q_{r}(Y)$, and some information about the set $\{u\leq0\}$, we
want to show that
\[
\mathcal{M}_{\frac{r}{2}}(Y)\leq\beta_{1}\mathcal{M}_{r}(Y).
\]
Clearly, in order to derive the above estimate, we only need to consider
positive part of $u$. We observe that to obtain the above estimate,
it actually suffices to show
\begin{equation}
u(Y)\leq\beta_{1}\mathcal{M}_{r}(Y),\label{eq:extra}
\end{equation}
for some $\beta_{1}\in(0,1)$. Indeed, for an arbitrary point $Z\in Q_{\frac{r}{2}}(Y)$,
we notice $Q_{\frac{r}{2}}(Z)\subset Q_{r}(Y)$, we can apply the
above estimate \eqref{eq:extra} to $Q_{\frac{r}{2}}(Z)$ with $Y$
replaced by $Z$ and $r$ replaced by $\frac{r}{2}$ with some measure
condition $\mu'$. In consistent with the measure condition in the
first growth theorem, we also observe that
\[
\left|\left\{ u>0\right\} \cap Q_{\frac{r}{2}}(Z)\right|\leq\left|\left\{ u>0\right\} \cap Q_{r}(Y)\right|\leq\mu\left|Q_{r}\right|=2^{n+2}\mu\left|Q_{\frac{r}{2}}(Z)\right|.
\]
So we just need to take $\mu=2^{-n-2}\mu'$ for the measure condition
in the first growth theorem.\end{rem}
\begin{proof}
Since every quantity is scaling invariant, we might assume $r=1$. And
we can multiply $u$ by a constant, so without loss of generality, we
can also assume $\mathcal{M}_{1}(Y)=1$. Also we assume $u(Y)>0$,
otherwise the result is trivial.
\begin{equation}
v(X)=v(x,t)=u(x,t)+t-s-|x-y|^{2}\label{eq:43}
\end{equation}
in $Q:=\left\{ v>0\right\} \cap Q_{1}(Y)$. Clearly, $Q\neq\emptyset$
since $v(Y)=u(Y)>0$ and $Y\in\partial Q_{1}(Y)$. It is easy to see
that $v\leq u$ in $Q$. By the measure condition, we have
\[
\left|Q\right|\leq\left|\left\{ u>0\right\} \cap Q_{1}(Y)\right|\leq\mu_{1}|Q_{1}(Y)|\leq\mu_{1}.
\]
Note that $v\leq0$ on $\partial_{p}Q_{1}(Y)$, so $v=0$ on $\partial_{p}Q$.
Since $-u_{t}+Lu\geq0$, we know that
\begin{equation}
(-\partial_{t}+L)v\geq0-1-2trace(a_{ij})-2|b|\geq-1-2nv^{-1}-2|b|.\label{eq:44}
\end{equation}
By the variant of Aleksandrov-Bakelman-Pucci-Krylov-Tso estimate \eqref{abpkt} with some constant $p<n+1$ and
H\"older inequality,
\begin{equation}
u(Y)\leq N(\nu,n,S)\left\Vert -1-2nv^{-1}-2|b|\right\Vert _{L^{p}(Q)}\leq N_{1}(\nu,n,S)\left(\mu^{\frac{1}{p}}+S^{\frac{1}{n+1}}\mu^{\frac{1}{p}-\frac{1}{n+1}}\right).\label{eq:45}
\end{equation}
Now we can pick $\mu$ small enough so that for fixed $\beta_{1}$,
then we have
\begin{equation}
u(Y)<\beta_{1}.\label{eq:46}
\end{equation}
It is also clear from the construction, $\beta_{1}\rightarrow0^{+}$
as $\mu\rightarrow0^{+}$.
\end{proof}
With the first growth theorem, we can do the following useful argument
which is helpful for us to find a non-degenerate point to build a
bridge between two regions we are interested in. Without loss of generality,
we still assume $r=1$, for $X\in Q_{1}(Y)$, we define
\begin{equation}
d(X):=\sup\left\{ \rho>0:\, Q_{\rho}(X)\subset Q_{1}(Y)\right\} .\label{eq:47}
\end{equation}
Roughly here $d$ plays roles of weights with which we can make
sure the point we are interested in is not degenerate, i.e., it is
in the interior of the cylinder. For $\gamma>0$,
we consider $d^{\gamma}u(x)$ instead of $u(x)$. $d^{\gamma}u(x)$
is a continuous function in $\overline{Q_{1}(Y)}$. Clearly, $d(Y)=1$, we obtain
\begin{equation}
u(Y)=d^{\gamma}u(Y)\leq M:=\sup_{Q_{1}(Y)}d^{\gamma}u.\label{eq:48}
\end{equation}
By our construction, $d^{\gamma}u$ vanishes on $\partial_{p}Q_{1}$,
so $\exists X_{0}\in\overline{Q_{1}(Y)}\backslash\partial_{p}Q_{1}$
such that
\begin{equation}
M=d^{\gamma}u(X_{0}).\label{eq:49}
\end{equation}
Let $r_{0}:=\frac{1}{2}d(X_{0})$, we consider the intermediate region
$Q_{r_{0}}(X_{0})$, In this region, we have
\[
\forall X\in Q_{r_{0}}(X_{0}),\,\, d(X)\geq r_{0}.
\]
Therefore, we conclude that
\begin{equation}
\sup_{Q_{r_{0}}(X_{0})}u\leq r_{0}^{-\gamma}\sup_{Q_{r_{0}}(X_{0})}d^{\gamma}u\leq r_{0}^{-\gamma}M\leq2^{\gamma}u(X_{0}).\label{eq:410}
\end{equation}
Now, we define $v=u-\frac{1}{2}u(X_{0})$, then
\[
v(X_{0})=\frac{1}{2}u(X_{0})\geq2^{-1-\gamma}\sup_{Q_{r_{0}}(X_{0})}u>2^{-1-\gamma}\sup_{Q_{r_{0}}(X_{0})}v.
\]
From the first growth theorem, Theorem \ref{thm:(First-Growth-Theorem)},
we know $\exists\mu(n,\nu,\gamma,S)\in(0,1]$ such that Theorem \ref{thm:(First-Growth-Theorem)} holds with $\beta_{1}=2^{-1-\gamma}$. Now the above
inequality tells us that $v$ does not satisfy the measure condition
in the first growth theorem. So
\begin{equation}
\left|\left\{ v>0\right\} \cap Q_{r_{0}}(X_{0})\right|=\left|\left\{ u>\frac{1}{2}u(X_{0})\right\} \cap Q_{r_{0}}(X_{0})\right|>\mu\left|Q_{r_{0}}(X_{0})\right|.\label{eq:411}
\end{equation}
Now, we can show an integral estimate which is equivalent to the first
growth theorem.
\begin{thm}
\label{thm:Lepsilon}Let a function $u\in C^{2,1}(Q_{r})$, where
$Q_{r}:=Q_{r}(Y)$, $Y=(y,s)\in\mathbb{R}^{n+1}$, $r>0$. If
$-u_{t}+Lu\geq0$ in $Q_{r}$, then for arbitrary $p>0$, we obtain
\begin{equation}
u_{+}^{p}(Y)\leq\frac{N}{\left|Q_{r}\right|}\intop_{Q_{r}}u_{+}^{p}\,dX,\label{eq:412}
\end{equation}
where $N$ only depends on $n,\,\nu,\, S,\, p$.\end{thm}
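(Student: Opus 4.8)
The plan is to read off the integral estimate directly from the non-degenerate point construction carried out right after the First Growth Theorem, Theorem~\ref{thm:(First-Growth-Theorem)}; indeed Theorem~\ref{thm:Lepsilon} is essentially that construction with the weight exponent tuned to $p$. First I would reduce to the normalized situation: since the hypothesis $-u_t+Lu\geq0$ is preserved under the parabolic scaling with the same $\nu$ and the same $S$, and since both sides of \eqref{eq:412} are invariant under this scaling (the factor $|Q_r|$ in the denominator is exactly what makes the right-hand side scale like $r^{-(n+2)}\int_{Q_r}$, matching the left-hand side), it suffices to treat $r=1$. Moreover we may assume $u(Y)>0$, for otherwise $u_+(Y)=0$ and there is nothing to prove; under $u(Y)>0$ we have $u_+(Y)=u(Y)$.

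Next, with $p>0$ fixed, I would run the weighted argument preceding Theorem~\ref{thm:Lepsilon} with the specific choice $\gamma=\frac{n+2}{p}$. That is: set $d(X):=\sup\{\rho>0:\ Q_\rho(X)\subset Q_1(Y)\}$, let $M:=\sup_{Q_1(Y)}d^{\gamma}u$, which is attained at some $X_0\in\overline{Q_1(Y)}\setminus\partial_p Q_1$ because $d^{\gamma}u$ is continuous on $\overline{Q_1(Y)}$ and vanishes on $\partial_p Q_1$, and put $r_0:=\frac{1}{2}d(X_0)$, so that $Q_{r_0}(X_0)\subset Q_1(Y)$, $d\geq r_0$ on $Q_{r_0}(X_0)$, and hence $\sup_{Q_{r_0}(X_0)}u\leq r_0^{-\gamma}M\leq 2^{\gamma}u(X_0)$. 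Applying the First Growth Theorem with $\beta_1=2^{-1-\gamma}$ to $v:=u-\frac{1}{2}u(X_0)$ on $Q_{r_0}(X_0)$ produces, exactly as in \eqref{eq:411}, a constant $\mu=\mu(n,\nu,S,p)\in(0,1]$ with
\[
\bigl|\{u>\tfrac{1}{2}u(X_0)\}\cap Q_{r_0}(X_0)\bigr|>\mu\,|Q_{r_0}(X_0)|.
\]

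Then I would simply integrate. Since $u(X_0)\geq d(X_0)^{\gamma}u(X_0)=M\geq d(Y)^{\gamma}u(Y)=u(Y)>0$, on the set above we have $u_+=u>\frac{1}{2}u(X_0)>0$, so
\[
\int_{Q_1(Y)}u_+^{p}\,dX\ \geq\ \int_{Q_{r_0}(X_0)}u_+^{p}\,dX\ \geq\ \Bigl(\tfrac{1}{2}u(X_0)\Bigr)^{p}\mu\,|Q_{r_0}(X_0)|\ =\ 2^{-p}\mu\,u(X_0)^{p}\,r_0^{\,n+2}\,|Q_1(Y)|,
\]
using $|Q_{r_0}(X_0)|=r_0^{n+2}|Q_1(Y)|$ (valid since $r_0\leq\frac12$). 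On the other hand $u(Y)\leq M=d(X_0)^{\gamma}u(X_0)=(2r_0)^{\gamma}u(X_0)$, hence $u(X_0)^{p}\,r_0^{\,n+2}\geq 2^{-\gamma p}u(Y)^{p}\,r_0^{\,n+2-\gamma p}=2^{-(n+2)}u(Y)^{p}$, because the choice $\gamma=\frac{n+2}{p}$ makes the exponent $n+2-\gamma p$ vanish. Combining the two displays gives $u_+(Y)^{p}=u(Y)^{p}\leq \frac{N}{|Q_1(Y)|}\int_{Q_1(Y)}u_+^{p}\,dX$ with $N=2^{\,2p+n+2}/\mu$, and undoing the scaling yields \eqref{eq:412} in general.

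The one delicate point — the real content concealed in ``every quantity is scaling invariant'' — is the bookkeeping of exponents. The construction hands us an intermediate cylinder $Q_{r_0}(X_0)$ whose radius $r_0$ is not under our control, so the integral lower bound carries an a priori uncontrolled factor $r_0^{\,n+2}$; it is precisely the weight exponent $\gamma=\frac{n+2}{p}$ that is \emph{forced} on us, so that this loss is compensated exactly by the gain $r_0^{-\gamma p}$ coming from the weight $d^{\gamma}$ in $M\geq(2r_0)^{\gamma}u(X_0)$. I would double-check that the resulting $\mu$, which enters through $\beta_1=2^{-1-\gamma}$ in Theorem~\ref{thm:(First-Growth-Theorem)}, depends only on $n,\nu,S$ and on $\gamma=\gamma(p)$, so the final constant $N$ depends only on $n,\nu,S,p$ as claimed; the rest is routine.
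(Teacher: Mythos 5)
Your proposal is correct and is essentially the paper's own proof: the same weight $d^{\gamma}$ with $\gamma=\frac{n+2}{p}$, the same maximizing point $X_0$ and intermediate cylinder $Q_{r_0}(X_0)$ with $r_0=\frac12 d(X_0)$, the same application of the First Growth Theorem with $\beta_1=2^{-1-\gamma}$ to get the measure lower bound \eqref{eq:411}, and the same cancellation $r_0^{\gamma p}=r_0^{n+2}$ in the final integration. The only difference is cosmetic bookkeeping in the last chain of inequalities.
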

\begin{proof}
Since the quantities we are considering are scaling invariant, we
might assume $r=1$. By the similar argument as above, we choose
\begin{equation}
\gamma=\frac{n+2}{p}.\label{eq:413}
\end{equation}
We get
\begin{equation}
\left|\left\{ u>\frac{1}{2}u(X_{0})\right\} \cap Q_{r_{0}}(X_{0})\right|>\mu\left|Q_{r_{0}}(X_{0})\right|,\label{eq:414}
\end{equation}
where $\mu=\mu(n,\nu,\gamma,S)\in(0,1]$. With the same notations
as above we have
\begin{eqnarray}
u_{+}^{p}(Y) & \leq & M^{p}=\left(u\right)^{p}(X_{0})=\left(2r_{0}\right)^{\gamma p}u^{p}(X_{0})\nonumber \\
 & \leq & \frac{\left(2r_{0}\right)^{\gamma p}}{\left|\left\{ u>\frac{1}{2}u(X_{0})\right\} \cap Q_{r_{0}}(X_{0})\right|}\intop_{\left\{ u>\frac{1}{2}u(X_{0})\right\} \cap Q_{r_{0}}(X_{0})}(2u)^{p}\,dX\nonumber \\
 & \leq & \frac{2^{\gamma p+p}r_{0}^{\gamma p}}{\mu\left|Q_{r_{0}}(X_{0})\right|}\intop_{Q_{1}}u_{+}^{p}\,dX.\label{eq:415}
\end{eqnarray}
By our construction, it gives $r_{0}^{\gamma p}=r_{0}^{n+2}$, so the
above estimate implies
\begin{equation}
u_{+}^{p}(Y)\leq\frac{N}{\left|Q_{1}\right|}\intop_{Q_{1}}u_{+}^{p}dX.\label{eq:416}
\end{equation}

\end{proof}
We have seen the first growth theorem implies Theorem \ref{thm:Lepsilon}.
Actually, we can also obtain the first growth theorem from Theorem \ref{thm:Lepsilon}.
Indeed, from Theorem \ref{thm:Lepsilon}, we have
\begin{equation}
u_{+}^{p}(Y)\leq\frac{N}{\left|Q_{r}\right|}\intop_{Q_{r}}u_{+}^{p}\,dX\leq N\frac{\left|\left\{ u>0\right\} \cap Q_{r}(Y)\right|}{\left|Q_{r}\right|}\sup_{Q_{r}}u_{+}^{p}\leq\mu N\sup_{Q_{r}}u_{+}^{p}.\label{eq:417}
\end{equation}
Now it is trivial to see $\beta_{1}\rightarrow0^{+}$ as $\mu\rightarrow0^{+}$
which is in consistent with the conditions in the first growth theorem.
\begin{rem*}
The idea we used to find a non-degenerate point above will be also
helpful when we prove the interior Harnack inequality.
\end{rem*}

\section{Second Growth Theorem}

Before we establish the second growth theorem, Theorem \ref{thm:(Second-Growth-Theorem).}, we need to prove some intermediate results based on the comparison principle and the Aleksandrov-Bakelman-Pucci-Krylov-Tso
estimate. Let us first do some preliminary calculations in order to
carry out some comparison arguments.

For fixed number $\alpha>0$ and $0<\epsilon<1$, in the cylinder
$Q=B_{r}(0)\times(-r^{2},(\alpha-1)r^{2})$, we can define
\begin{equation}
\psi_{0}=\frac{(1-\epsilon^{2})(t+r^{2})}{\alpha}+\epsilon^{2}r^{2}\label{eq:51}
\end{equation}
and
\begin{equation}
\psi_{1}=\left(\psi_{0}-|x|^{2}\right)_{+}\label{eq:52}
\end{equation}
where $\left(\cdot\right)_{+}$ means positive part of the function.
And we also define
\begin{equation}
\psi=\psi_{1}^{2}\psi_{0}^{-q}\label{eq:53}
\end{equation}
for some number $q\geq2$ to be determined later. First of all, we
notice $\psi$ is $C^{2,1}$ in $\widetilde{Q}:=\left\{ (x,t)|\,|x|^{2}<\psi_{0},\,-r^{2}<t<(\alpha-1)r^{2}\right\} $.
It is clear that $-\psi_{t}+L\psi=0$ if $\psi_{0}\leq|x|^{2}$. Now
if $\psi_{0}>|x|^{2}$, by some computations, we obtain
\[
-\psi_{t}+a_{ij}D_{ij}\psi=\psi_{0}^{-q}\left[8a_{ij}x_{i}x_{j}-4\psi_{1}trace(a_{ij})+\frac{(1-\epsilon^{2})q}{\alpha\psi_{0}}\psi_{1}^{2}-2\frac{(1-\epsilon^{2})}{\alpha}\psi_{1}\right].
\]
Set $F_{1}=\frac{2}{\alpha}+8n\nu^{-1}$, and $\xi=\frac{\psi_{1}}{\psi_{0}}$
then

\begin{equation}
-\psi_{t}+a_{ij}D_{ij}\psi\geq\psi_{0}^{1-q}\left[\frac{(1-\epsilon^{2})q}{\alpha}\xi^{2}-F_{1}\xi+8\lambda\right].\label{eq:54}
\end{equation}
Pick
\begin{equation}
q=2+\frac{\alpha}{32(1-\epsilon^{2})},\label{eq:55}
\end{equation}
so that the quadratic form in \eqref{eq:54} is non-negative. Then we
can conclude that 
\[
-\psi_{t}+\sum_{ij}a_{ij}D_{ij}\psi\geq0,\,\,\,\,\forall(x,t)\in Q
\]
We also notice that
\[
\psi(x,-r^{2})\leq(\epsilon r)^{-2q+4},\,\,\,\,\forall|x|\leq r,
\]
and
\begin{equation}
\psi\left(x,(\alpha-1)r^{2}\right)\geq\frac{9}{16}r^{-2q+4},\,\,\,\,\,\forall|x|\leq\frac{r}{2}.\label{eq:56}
\end{equation}
Finally, we notice that by the monotonicity of $\psi$ with respect
to $t\in\left[-r^{2},(\alpha-1)r^{2}\right]$ for $x=0$, we obtain
\begin{equation}
\psi\left(0,t\right)\geq\frac{9}{16}r^{-2q+4}.\label{eq:57}
\end{equation}
Now consider $-u_{t}+Lu\leq0$ , $u>0$ in $\Omega$. In Lemma \ref{lem:samegrowth}, fist of all, we establish that at least
for a cylinder short enough, if we have a lower bound on some interior
portion of the bottom, then it has a quantitative lower bound for
the same portion on the top of the short cylinder. We also notice
that the shortness only depends on the prescribed constants
but not $u$. Then we can iterate this process to get a quantitative
lower bound for an arbitrary time.
\begin{lem}
\label{lem:samegrowth}Let $\alpha$ be a positive constant and $-u_{t}+Lu\leq0$,
$u>0$ in $\Omega$. Suppose $Q:=B_{r}(0)\times(-r^{2},(\alpha-1)r^{2})\subset\Omega$
and $u>0$ in $B_{r}(0)\times(-r^{2},(\alpha-1)r^{2})$. Then for $\epsilon<\frac{1}{2}$, there are positive constants $C_{1}=C_{1}(n,\nu)$ and $m=m(n,\nu,\alpha)$
such that if
\begin{equation}
u\geq\ell\label{eq:58}
\end{equation}
on $B_{\epsilon r}(0)\times\{-r^{2}\}$, then
\begin{equation}
u\geq C_{1}\epsilon^{m}\ell\label{eq:59}
\end{equation}
on $B_{\epsilon r}(0)\times\{(\alpha-1)r^{2}\}$.\end{lem}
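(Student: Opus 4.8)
The plan is to use the barrier $\psi$ constructed above as a lower comparison function for $u$: a suitable multiple of $\psi$ lies below $u$ on $\partial_{p}Q$, the resulting bound is pushed from the bottom face to the top by the maximum principle, and the error generated by the drift term $b_{i}D_{i}\psi$ is absorbed using the variant of the Aleksandrov-Bakelman-Pucci-Krylov-Tso estimate \eqref{abpkt}. Since every quantity in the statement is parabolic-scaling invariant, I would first reduce to $r=1$, so that $Q=B_{1}(0)\times(-1,\alpha-1)$ and, with $q$ as in \eqref{eq:55}, the barrier $\psi=\psi_{1}^{2}\psi_{0}^{-q}$ has the four properties recorded above: $-\psi_{t}+\sum_{ij}a_{ij}D_{ij}\psi\geq0$ in $Q$; $\psi(\cdot,-1)$ is supported in $\overline{B_{\epsilon}}$ with $\psi(\cdot,-1)\leq\epsilon^{4-2q}$ (because $\psi_{1}(x,-1)=(\epsilon^{2}-|x|^{2})_{+}$); $\psi\equiv0$ on $\partial B_{1}\times(-1,\alpha-1)$; and $\psi(x,\alpha-1)\geq\tfrac{9}{16}$ for $|x|\leq\tfrac12$ by \eqref{eq:56}.

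Put $c:=\ell\,\epsilon^{2q-4}$ (we may assume $\ell>0$) and $v:=u-c\psi$. On the bottom face $\psi$ is supported where $u\geq\ell$, and there $c\psi\leq c\,\epsilon^{4-2q}=\ell\leq u$; on the lateral part of $\partial_{p}Q$ we have $\psi=0<u$; hence $v\geq0$ on $\partial_{p}Q$. Using $-u_{t}+Lu\leq0$ together with $-\psi_{t}+\sum_{ij}a_{ij}D_{ij}\psi\geq0$,
\begin{equation}
-(-v)_{t}+L(-v)=c\bigl(-\psi_{t}+L\psi\bigr)-\bigl(-u_{t}+Lu\bigr)\geq c\sum_{i}b_{i}D_{i}\psi\geq -c\,|\nabla\psi|\,|b|=:f\leq 0 ,
\end{equation}
and $-v\leq0$ on $\partial_{p}Q$, so \eqref{abpkt} (with its exponent $p<n+1$ and a constant $N=N(n,\nu,S)$) gives $\sup_{Q}(-v)\leq N\,c\,\bigl\||\nabla\psi|\,|b|\bigr\|_{L^{p}(Q)}$. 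Evaluating at $t=\alpha-1$ on $\overline{B_{1/2}}\supset\overline{B_{\epsilon}}$ then yields
\begin{equation}
u\geq c\psi-\sup_{Q}(-v)\geq \tfrac{9}{16}\,c-N\,c\,\bigl\||\nabla\psi|\,|b|\bigr\|_{L^{p}(Q)} ,
\end{equation}
so the whole matter comes down to controlling the drift term by $\tfrac{9}{32}$.

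The main obstacle is precisely this drift estimate, and in particular making it uniform as $\epsilon\to0$. Note that $\nabla\psi$ is supported in $\widetilde{Q}$ and there $|\nabla\psi|=4\psi_{1}|x|\,\psi_{0}^{-q}\leq4\psi_{0}^{3/2-q}$; the naive bound $\psi_{0}^{3/2-q}\leq\epsilon^{3-2q}$ coming from $\psi_{0}\geq\epsilon^{2}$ is useless, since $3-2q<0$. Instead I would apply Hölder with exponents $n+1$ and $p^{*}$, where $\tfrac1p=\tfrac1{n+1}+\tfrac1{p^{*}}$, and bound $\|b\|_{L^{n+1}(\widetilde{Q})}\leq S^{1/(n+1)}$ by \eqref{eq:23}:
\begin{equation}
\bigl\||\nabla\psi|\,|b|\bigr\|_{L^{p}(\widetilde{Q})}\leq 4\,\bigl\|\psi_{0}^{3/2-q}\bigr\|_{L^{p^{*}}(\widetilde{Q})}\,S^{1/(n+1)} .
\end{equation}
Slicing $\widetilde{Q}$ in time — its section at time $t$ has measure $\sim\psi_{0}(t)^{n/2}$ and $\psi_{0}$ runs linearly over $(\epsilon^{2},1)$ — gives
\begin{equation}
\int_{\widetilde{Q}}\psi_{0}^{p^{*}(3/2-q)}\,dx\,dt\ \sim\ \frac{\alpha}{1-\epsilon^{2}}\int_{\epsilon^{2}}^{1}s^{\,p^{*}(3/2-q)+\frac{n}{2}}\,ds .
\end{equation}
Thus if $p$ is chosen small enough that $p^{*}<n+2$ and $\alpha$ is small enough that $q-\tfrac32$ is close to $\tfrac12$, then $p^{*}(3/2-q)+\tfrac{n}{2}>-1$, the integral stays bounded as $\epsilon\to0$, and one gets $\bigl\||\nabla\psi|\,|b|\bigr\|_{L^{p}(Q)}\leq C(n,\nu,S)\,\alpha^{1/p^{*}}$. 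Choosing $\alpha<\alpha_{0}(n,\nu,S)$ with $NC\alpha^{1/p^{*}}\leq\tfrac{9}{32}$ then forces $u\geq\tfrac{9}{32}\,c=\tfrac{9}{32}\,\ell\,\epsilon^{2q-4}\geq\tfrac{9}{32}\,\ell\,\epsilon^{\alpha/12}$ on $\overline{B_{\epsilon}}\times\{\alpha-1\}$ (here $\epsilon<\tfrac12$ and $2q-4=\tfrac{\alpha}{16(1-\epsilon^{2})}\leq\tfrac{\alpha}{12}$ by \eqref{eq:55}); undoing the rescaling proves the short-cylinder case with $C_{1}=\tfrac{9}{32}$ and $m=\tfrac{\alpha}{12}$, and an arbitrary $\alpha$ is then reached by iterating this step $\lceil\alpha/\alpha_{0}\rceil$ times, the iteration indicated in the text preceding the lemma.
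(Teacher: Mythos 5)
Your barrier construction, the comparison $v=u-\ell(\epsilon r)^{2q-4}\psi$ on $\partial_{p}Q$, the use of \eqref{abpkt} to absorb the drift error, and the closing iteration all match Step 1 and Step 3 of the paper's proof. The genuine gap is in the one step you yourself single out as ``the whole matter'': the claim that $\bigl\||\nabla\psi|\,|b|\bigr\|_{L^{p}(Q)}\leq C\alpha^{1/p^{*}}$ \emph{uniformly in} $\epsilon$. Your computation needs $p^{*}(q-\tfrac32)<\tfrac{n}{2}+1$, hence (since $q-\tfrac32\geq\tfrac12$) $p^{*}<n+2$, i.e. $\tfrac1p>\tfrac1{n+1}+\tfrac1{n+2}$, i.e. $p<\tfrac{(n+1)(n+2)}{2n+3}$. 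But $p$ is not a free parameter: Theorem \ref{variant} supplies one specific $p=p(\nu,n,S)$ coming from the Gehring exponent $q>\tfrac{n+1}{n}$ of Theorem \ref{thm:Green2}, and in general this $p$ is only marginally below $n+1$ (the estimate \eqref{abpkt} persists trivially for \emph{larger} exponents by H\"older, but there is no way to pass to smaller ones). For $p$ near $n+1$ one has $p^{*}\to\infty$, your integral $\int_{\epsilon^{2}}^{1}s^{p^{*}(3/2-q)+n/2}\,ds$ blows up like a negative power of $\epsilon$, and after the $1/p^{*}$ root the drift contribution behaves like $\epsilon^{3-2q+(n+2)/p^{*}}\approx\epsilon^{-1}$; it cannot be dominated by $\tfrac{9}{16}$ uniformly in $\epsilon$ by shrinking $\alpha$ alone.

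This is exactly why the paper does not attempt a uniform-in-$\epsilon$ bound: in \eqref{eq:516}--\eqref{eq:517} the height $h$ of the short cylinder is chosen \emph{depending on $\epsilon$}, so that $N S^{1/(n+1)}(\alpha h)^{1/p-1/(n+1)}\leq\tfrac12\epsilon^{2q}$, and the resulting loss is then recovered through the slanted-cylinder step and the iteration. To repair your argument you would have to let $\alpha_{0}$ depend on $\epsilon$ (roughly $\alpha_{0}\lesssim\epsilon^{p^{*}}$), which changes the number of iterations and hence the exponent; as written, the claimed constants $C_{1}=\tfrac{9}{32}$ and $m=\alpha/12$ do not follow. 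The remainder of your argument --- in particular the observation that the lower bound already holds on all of $B_{1/2}$ at the top time, which would let you bypass the paper's slanted-cylinder Step 2 during the iteration --- is sound once this quantitative step is fixed.
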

\begin{proof}
\emph{\noun{Step 1:}}\noun{ }
\begin{equation}
-\psi_{t}+L\psi\geq b_{i}D_{i}\psi=-4\psi_{1}\psi_{0}^{-q}(b,x)\geq-4|b|r(\frac{1}{2})^{-2q}r^{2-2q}.\label{eq:510}
\end{equation}
Consider
\begin{equation}
v=u-\ell(\epsilon r)^{2q-4}\psi\label{eq:511}
\end{equation}
It is clear $\psi=0$ for $|x|=r$. So we can conclude that $v\geq0$
on $\partial_{p}\widetilde{Q}$ by the above calculations. Finally,
we apply the Aleksandrov-Bakelman-Pucci-Krylov-Tso estimate to $-v$,
we get
\begin{equation}
v\geq-N(n,\nu,S)\ell\epsilon{}^{-4}S\label{eq:512}
\end{equation}
 in $\widetilde{Q}$. In other words, we get
\begin{equation}
u\geq\ell(\epsilon r)^{2q-4}\psi-N(n,\nu,S)\ell\epsilon^{-4}S.\label{eq:513}
\end{equation}
So we have
\begin{equation}
u(x,(\alpha-1)r^{2})\geq\ell\epsilon{}^{-4}\left[\frac{9}{16}(\epsilon)^{2q}-N(n,\nu,S)S\right].\label{eq:514}
\end{equation}
In particular, with the monotonicity of $\psi$ with respect to $t$
when $x=0$, we obtain
\begin{equation}
u(0,t)\geq\ell\epsilon{}^{-4}\left[\frac{9}{16}(\epsilon)^{2q}-N(n,\nu,S)S\right],\,\forall t\in\left[-r^{2},(\alpha-1)r^{2}\right].\label{eq:515}
\end{equation}
By the similar calculations as above but with the variant of Aleksandrov-Bakelman-Pucci-Krylov-Tso
applied to the region $B_{r}(0)\times(-r^{2},(h\alpha-1)r^{2})$,
we get
\begin{equation}
u(0,(h\alpha-1)r^{2})\geq\ell\epsilon{}^{-4}\left[\frac{9}{16}(\epsilon)^{2q}-N(n,\nu,S)S^{\frac{1}{n+1}}h^{\frac{1}{p}-\frac{1}{n+1}}\right].\label{eq:516}
\end{equation}
Pick $h=h(n,\nu,S,\epsilon)$ small, we know that
\begin{equation}
\left[\frac{9}{16}(\epsilon)^{2q}-N(n,\nu,S)S^{\frac{1}{n+1}}(\alpha h)^{\frac{1}{p}-\frac{1}{n+1}}\right]\geq\frac{1}{2}\epsilon^{2q}.\label{eq:517}
\end{equation}
So we can conclude
\begin{equation}
u(0,t)\geq C_{1}\epsilon^{k}\ell\label{eq:518}
\end{equation}
for $t\leq(h-1)r^{2}$ where $k=2q-4$ and $C_{1}$ does not depend
on $u$.\\
\emph{\noun{Step 2:}}\noun{ }For arbitrary $t=(\alpha h-1)r^{2}$,
and $x\in B_{\epsilon}(0)$, we can use a slanted cylinder with radius
$\epsilon$ to connect $B_{\epsilon}(x)\times\{(\alpha h-1)r^{2}\}$
and $B_{\epsilon}(0)\times\{-r^{2}\}$. We can use a change of
coordinate to reduce the slanted cylinder to a regular cylinder. We
notice that with $k_{i}:=\frac{y_{i}}{s}$ and $\frac{|y_{i}|}{s}=|k_{i}|\leq\frac{|y|}{s}\leq K$,
then define $w_{i}=x_{i}-k_{i}t$ and $z=t$. In this coordinate,
the slanted cylinder is transformed to a standard cylinder. The equation
with respect to the new coordinate is
\begin{equation}
-u_{z}+\sum_{ij}a_{ij}D_{w_{i}w_{j}}u+\sum_{i}(b_{i}+k_{i})D_{w_{i}}u\leq0.\label{eq:519}
\end{equation}
Then we apply the standard cylinder results to the equation with respect
to coordinate $(w,z)$. We can do the same argument for all $x\in B_{\epsilon}(0)$.
We have find $h_{x}$ such that
\begin{equation}
u(x,(\alpha h_{x}-1)r^{2})\geq C_{2}\epsilon^{k}\ell.\label{eq:520}
\end{equation}
Since $K$ is uniformly bounded above, so indeed, $h_{x}$ and $C_{2}$
only depend on $\epsilon$, $n,\nu,S$. In particular, $h_{x}$ can
be uniformly bounded from below. Finally, we take
\begin{equation}
h_{0}=\inf_{x\in B_{\epsilon}(0)}h_{x}>0,\label{eq:521}
\end{equation}
then  for $x\in B_{\epsilon}(0)$, $t=(\alpha h_{0}-1)r^{2}$,
we obtain
\begin{equation}
u(x,t)\geq C_{2}\epsilon^{k}\ell.\label{eq:522}
\end{equation}

\end{proof}
\emph{\noun{Step 3:}}\noun{ }Now for the general case, let $\alpha$
be positive constant as above, we can pick $h_{0}$ based on our discussion
above. Finally by a simple iteration argument, we get the above result.
Therefore $\exists m=m(n,S,\nu,\alpha)$, such that at least we conclude
\begin{equation}
u\geq C_{1}\epsilon^{m}\ell\label{eq:523}
\end{equation}
on $B_{\epsilon}(0)\times\{(\alpha-1)r^{2}\}$.
\begin{rem}
In fact, in consistent with the Lemma 7.39 in \cite{GL2}, we can
show that
\[
u(x,(\alpha-1)r^{2})\geq C_{3}\epsilon^{m}\ell
\]
on $B_{\frac{1}{2}r}(0)\times\{(\alpha-1)r^{2}\}$.
\end{rem}

For a fixed point $Y=(y,s)\in\mathbb{R}^{n+1}$
with $s>0$, and $r>0$, we define the slanted cylinder
\begin{equation}
V_{r}=V_{r}(Y):=\left\{ X=(x,t)\in\mathbb{R}^{n+1};\,\left|x-\frac{t}{s}y\right|<r,0<t<s\right\} .\label{eq:122}
\end{equation}

Now the useful slanted cylinder lemma \cite{FS} follows easily from
Lemma \ref{lem:samegrowth} after we apply Lemma
\ref{lem:samegrowth} to $1-u$ after we multiply $u$ by a constant
to reduce our problem to the case $1=\sup_{V_{r}(Y)}u$.

\begin{lem}[Slanted Cylinder Lemma]\label{lem:(Slant-Cylinder-Lemma)} Let a function
$u\in C^{2,1}(\overline{V_{r}})$ satisfy $-u_{t}+Lu\geq0$ in a slanted
cylinder $V_{r}$, which is defined in \eqref{eq:122} with $Y=(y,s)\in\mathbb{R}^{n+1}$,
$s>0$, $r>0$, such that

\begin{equation}
K^{-1}r|y|\leq s\leq Kr^{2}\label{eq:525}
\end{equation}
where $K>1$ is a constant. In addition, suppose $u\leq0$ on $D_{r}:=B_{r}(0)\times\{0\}$.
Then

\begin{equation}
u(Y)\leq\beta_{2}\sup_{V_{r}(Y)}u_{+}\label{eq:526}
\end{equation}
with a constant $\beta_{2}=\beta_{2}(\nu,n,K,S)<1$.
\end{lem}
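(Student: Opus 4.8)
The plan is to reduce the slanted cylinder to a straight one by the shear change of coordinates already used in Step 2 of the proof of Lemma \ref{lem:samegrowth}, and then apply Lemma \ref{lem:samegrowth} to the transformed problem. First I would normalize: multiplying $u$ by a positive constant we may assume $\sup_{V_r(Y)}u_+ = 1$, and we may assume $u(Y)>0$, since otherwise \eqref{eq:526} is trivial. The hypothesis $u\leq 0$ on $D_r = B_r(0)\times\{0\}$ together with $\sup_{V_r}u_+=1$ suggests passing to $w := 1-u$, which then satisfies $-w_t+Lw\leq 0$, $w>0$ is \emph{not} guaranteed but $w\geq 0$ on the bottom $D_r$ (in fact $w\geq 1$ there), and $w>0$ throughout $V_r$ unless $u$ attains the value $1$; one handles the degenerate case by a limiting argument or by working on $\{w>0\}$. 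So the strategy is to get a quantitative \emph{lower} bound for $w$ at $Y$, i.e. $w(Y)\geq 1-\beta_2$, which is exactly \eqref{eq:526}.

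The key step is the coordinate change: with $k := y/s\in\mathbb{R}^n$, set $z = t$ and $\xi = x - (t/s)y = x - kt$, so that the slanted cylinder $V_r(Y) = \{|x-(t/s)y|<r,\ 0<t<s\}$ becomes the straight cylinder $B_r(0)\times(0,s)$ in $(\xi,z)$ coordinates. Under this shear the equation $-u_t+Lu\leq 0$ transforms into
\begin{equation}
-u_z + \sum_{ij} a_{ij} D_{\xi_i\xi_j} u + \sum_i (b_i + k_i) D_{\xi_i} u \leq 0,\label{eq:plan-shear}
\end{equation}
with the \emph{same} ellipticity constant $\nu$. The scaling constraint \eqref{eq:525}, namely $K^{-1}r|y|\leq s\leq Kr^2$, is precisely what controls the new drift: it gives $|k| = |y|/s \leq K r^{-1}\cdot (s/|y|)^{-1}\cdot\ldots$ — more carefully, $|y|/s \leq K/r$, so the added constant vector $k$ has size $\lesssim K/r$, and on the cylinder of radius $r$ and height $s\leq Kr^2$ its Morrey-type contribution $\big(\tfrac{1}{r}\int_{Q_r}|k|^{n+1}\big)^{1/(n+1)} \lesssim K$ stays bounded in terms of $K$ alone. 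Hence the hypotheses of Lemma \ref{lem:samegrowth} hold for \eqref{eq:plan-shear} with a parabolicity/Morrey data depending only on $n,\nu,K,S$, after matching the cylinder $B_r(0)\times(0,s)$ to the normalized form $B_r(0)\times(-r^2,(\alpha-1)r^2)$ with $\alpha = 1 + s/r^2 \in (1, 1+K]$ (again bounded in terms of $K$).

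Now apply Lemma \ref{lem:samegrowth} (in the symmetric form stated in the Remark after it) to $w$, using the lower bound $w\geq 1$ on a fixed fraction $B_{\epsilon r}(0)\times\{0\}$ of the bottom — here one first needs that $w$ is bounded below by a positive constant on some \emph{interior} portion of the bottom, which follows from $u\leq 0$ on all of $D_r$ together with continuity, so in fact $w\geq 1$ on $D_r$ and we may take $\epsilon$ close to $1/2$. Lemma \ref{lem:samegrowth} then yields $w(Y) \geq w(0,(\alpha-1)r^2\text{-level at }\xi=0)$, translated back, $\geq C_1\epsilon^m$ with $C_1 = C_1(n,\nu)$ and $m = m(n,\nu,\alpha,S) = m(n,\nu,K,S)$; note $Y$ in $(\xi,z)$ coordinates is $(0,s)$, sitting on the axis at the top, which is exactly where Lemma \ref{lem:samegrowth} delivers its estimate via the monotonicity \eqref{eq:57}. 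Setting $\beta_2 := 1 - C_1\epsilon^m < 1$ gives $u(Y) = 1 - w(Y) \leq \beta_2 = \beta_2(\nu,n,K,S)$, which is \eqref{eq:526}. The main obstacle, I expect, is the bookkeeping around the degenerate case $\sup u_+ = u\equiv 1$ somewhere and the precise verification that the sheared drift $b+k$ still lies in the Morrey class $M_{n+1,n+1}^{1/(n+1)}$ with norm controlled by $S$ and $K$ uniformly over the relevant cylinders; everything else is a direct citation of Lemma \ref{lem:samegrowth} after the shear.
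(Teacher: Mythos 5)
Your proposal is correct and follows essentially the same route as the paper, which simply notes that the lemma "follows easily from Lemma \ref{lem:samegrowth} after we apply Lemma \ref{lem:samegrowth} to $1-u$" once $u$ is normalized so that $\sup_{V_r(Y)}u=1$; the shear reduction of the slanted cylinder and the control of the added drift $k=y/s$ via \eqref{eq:525} that you spell out are exactly the ingredients already set up in Step 2 of the proof of Lemma \ref{lem:samegrowth}. Your write-up supplies the details the paper leaves implicit, and the points you flag as potential obstacles (the degenerate case and the Morrey bound on $b+k$) are handled as you suggest.
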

With the slanted cylinder lemma, we are ready to prove the second
growth theorem. The slanted cylinder lemma, i.e., Lemma \ref{lem:(Slant-Cylinder-Lemma)}
above plays a crucial role in this section to build a connection between
different time slides. The second growth theorem helps us construct
some control of the oscillation between different time slides. We
follow the arguments in \cite{FS}.
\begin{thm}[Second Growth Theorem]
\textup{\label{thm:(Second-Growth-Theorem).}
Let a function $u\in C^{2,1}\left(\overline{Q_{r}}\right)$, where
$Q_{r}:=Q_{r}(Y)$, $Y=(y,s)\in\mathbb{R}^{n+1}$, $r>0$, and let
$-u_{t}+Lu\geq0$ in $Q_{r}$. In addition, suppose $u\leq0$ on $D_{\rho}:=B_{\rho}(z)\times\{\tau\}$,
where $B_{\rho}(z)\subset B_{r}(y)$ and
\begin{equation}
s-r^{2}\leq\tau\leq s-\frac{1}{4}r^{2}-\rho^{2}.\label{eq:527}
\end{equation}
Then
\begin{equation}
u(Y)\leq\beta_{3}\sup_{Q_{r}(Y)}u_{+}\label{eq:528}
\end{equation}
where $\beta_{3}:=\beta_{3}(n,\nu,\rho/r,S)<1$ is a constant.}\end{thm}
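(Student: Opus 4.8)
The plan is to deduce the Second Growth Theorem from a single application of the Slanted Cylinder Lemma (Lemma \ref{lem:(Slant-Cylinder-Lemma)}): I want to exhibit one slanted cylinder $V\subset Q_r(Y)$ whose bottom disc is precisely the bad disc $D_\rho$ and whose vertex is precisely the target point $Y$. By the parabolic scaling invariance I first normalize $r=1$, so that $B_\rho(z)\subset B_1(y)$ gives $|y-z|\le 1-\rho$ (and $\rho<1$), while \eqref{eq:527} gives $s-\tau\in[\tfrac14+\rho^{2},\,1]$; if $u(Y)\le 0$ then \eqref{eq:528} is trivial, so assume $u(Y)>0$. Translating coordinates by $x'=x-z$, $t'=t-\tau$ — which changes neither the structure of the equation nor $\nu$ nor $S$ — turns $D_\rho$ into $B_\rho(0)\times\{0\}$, turns $Q_1(Y)$ into $B_1(y-z)\times(s-\tau-1,\,s-\tau)$, and turns $Y$ into $\tilde Y=(y-z,\,s-\tau)$ with $s-\tau>0$. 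The candidate is then $V:=V_\rho(\tilde Y)=\{(x',t'):|x'-\tfrac{t'}{s-\tau}(y-z)|<\rho,\ 0<t'<s-\tau\}$, whose bottom disc is $D_\rho$ and whose vertex is $\tilde Y$.

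The technical content is to check two facts about $V$. \emph{Containment} $V\subset B_1(y-z)\times(s-\tau-1,\,s-\tau)$: in time, $\tau\ge s-1$ gives $s-\tau\le 1$, hence $(0,s-\tau)\subset(s-\tau-1,s-\tau)$; in space, for $(x',t')\in V$ and $\theta:=t'/(s-\tau)\in(0,1)$ the triangle inequality gives $|x'-(y-z)|\le|x'-\theta(y-z)|+(1-\theta)|y-z|<\rho+|y-z|\le 1$. \emph{Admissibility} of the condition $K^{-1}\tilde r|\tilde y|\le\tilde s\le K\tilde r^{2}$ of Lemma \ref{lem:(Slant-Cylinder-Lemma)}: with $\tilde r=\rho$, $\tilde y=y-z$, $\tilde s=s-\tau$, and $K:=\rho^{-2}>1$ (in general $K=(r/\rho)^2$), the upper bound $\tilde s\le K\tilde r^{2}$ reads $s-\tau\le 1$ and holds since $\tau\ge s-1$, while the lower bound $K^{-1}\tilde r|\tilde y|\le\tilde s$ reads $\rho^{3}|y-z|\le s-\tau$ and holds because $\rho^{3}|y-z|\le\rho|y-z|\le\rho(1-\rho)\le\tfrac14\le s-\tau$ by AM--GM together with \eqref{eq:527}. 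Granting these, Lemma \ref{lem:(Slant-Cylinder-Lemma)} applies to $u$ on $V$ — indeed $u\in C^{2,1}(\overline V)$, $-u_t+Lu\ge 0$ on $V\subset Q_r$, and $u\le 0$ on the bottom disc $D_\rho$ — and undoing the translation yields $u(Y)\le\beta_2(\nu,n,K,S)\sup_V u_+\le\beta_2\sup_{Q_r(Y)}u_+$; since $K=(r/\rho)^2$ depends only on $\rho/r$, one takes $\beta_3:=\beta_2=\beta_3(n,\nu,\rho/r,S)<1$.

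The step I expect to be the main obstacle — really the only subtle point — is recognizing that a \emph{single} slanted cylinder suffices, i.e.\ that the slant $|y-z|/(s-\tau)$ and the aspect ratio $(s-\tau)/\rho^{2}$ can both be absorbed into the one parameter $K=(r/\rho)^2$, so that no chaining of slanted cylinders through intermediate time levels is needed. This is exactly why \eqref{eq:527} carries the margin $s-\tau\ge\tfrac14 r^{2}+\rho^{2}$ rather than merely $s-\tau>0$: this lower bound on $s-\tau$, combined with $B_\rho(z)\subset B_r(y)$, is what keeps the slant controlled and forces $\beta_3$ to depend only on $\rho/r$ — degenerating to $1$ as $\rho/r\to 0$, consistent with the conclusion being vacuous when the bad set is tiny.
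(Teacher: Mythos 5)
Your proof is correct and follows essentially the same route as the paper: both reduce to one application of the Slanted Cylinder Lemma on a slanted cylinder with base $D_\rho$ and vertex at the target point, noting that the admissibility constant $K$ in \eqref{eq:525} can be taken to depend only on $\rho/r$ thanks to $B_\rho(z)\subset B_r(y)$ and the time margin in \eqref{eq:527}. Your write-up is in fact more explicit than the paper's in verifying the containment $V\subset Q_r(Y)$ and the condition \eqref{eq:525}, whereas the paper applies the lemma at every $Y'\in Q_{r/2}(Y)$ to record the slightly stronger bound $\sup_{Q_{r/2}}u_+\le\beta_2\sup_{Q_r}u_+$; this is a cosmetic difference.
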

\begin{proof}
After rescaling and translation in $\mathbb{R}^{n+1}$, we reduce
our problem to $r=1$, and $(z,\tau)=(0,0)\in\mathbb{R}^{n+1}$. For
an arbitrary point $Y'\in Q_{\frac{1}{2}}(Y)$, we can apply the slanted
cylinder lemma to the slanted cylinder $V_{\rho}(Y')\subset Q_{1}(Y)$.
Note that in this situation, the constant $K$ in slanted cylinder lemma
only depends on $\rho$. Therefore, with the parameter $\beta_{2}$
from the slanted cylinder lemma, we have
\[
u(Y')\leq\beta_{2}\sup_{V_{\rho}(Y')}u_{+}\leq\beta_{2}\sup_{Q_{1}(Y)}u_{+}.
\]
The above estimate holds for all $Y'\in Q_{\frac{1}{2}}(Y)$. Therefore
we get
\[
\sup_{Q_{\frac{1}{2}}(Y)}u_{+}\leq\beta_{2}\sup_{Q_{1}(Y)}u_{+}.
\]

\end{proof}
Now we establish an estimate similar to above with  more explicit
dependence of the constant on the ratio $\rho/r$.
\begin{lem}
\label{lem:quotient}Let a function $v\in C^{2,1}\left(\overline{Q_{r}}\right)$
satisfy $v\geq0$, $-v_{t}+Lv\leq0$ in $Q_{r}:=Q_{r}(Y),Y=(y,s)\in\mathbb{R}^{n+1},r>0$.
For arbitrary disks $D_{\rho}:=B_{\rho}(z)\times\{\tau\}$ and
$D^{0}:=B_{\frac{r}{2}}(y)\times\{\sigma\}$, such that $B_{\rho}(z)\subset B_{r}(y)$
and
\begin{equation}
s-r^{2}\leq\tau<\tau+h^{2}r^{2}\leq\sigma\leq s,\label{eq:529}
\end{equation}
where $h\in(0,1)$ is a constant. Then 
\begin{equation}
\inf_{D_{\rho}}v\leq\left(\frac{2r}{\rho}\right)^{\gamma}\inf_{D^{0}}v\label{eq:530}
\end{equation}
where $\gamma=\gamma(n,\nu,h,S)$.\end{lem}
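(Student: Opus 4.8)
The statement is a weak Harnack--type inequality for the positive supersolution $v$: a lower bound for $v$ on the small disk $D_{\rho}$ (which may sit anywhere inside $B_{r}(y)$, fairly low in time) must control a lower bound on the half--size disk $D^{0}$ at the later time level $\sigma$. The plan is to carry the lower bound of $v$ forward in time and outward in space along a \emph{chain} of parabolic cylinders joining $D_{\rho}$ to $D^{0}$, applying on each link the lower--bound propagation already established in Lemma~\ref{lem:samegrowth} --- together with the slanted--cylinder version of it obtained in Step~2 of that proof, which additionally lets one drift the centre of the disk.

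\emph{Reductions.} Every quantity in \eqref{eq:530} is invariant under the parabolic scaling, so we may take $r=1$; a space--time translation then puts $Y=(0,0)$, $Q_{1}=B_{1}(0)\times(-1,0)$, $-1\le\tau<\tau+h^{2}\le\sigma\le0$, $B_{\rho}(z)\subset B_{1}(0)$ (so $|z|\le1-\rho$ and $\rho\le1$), $D^{0}=B_{1/2}(0)\times\{\sigma\}$. Since $L$ has no zero--order term, $v+\varepsilon$ is again a supersolution; applying the conclusion to $v+\varepsilon$ and letting $\varepsilon\to0^{+}$ we may assume $v>0$, so that Lemma~\ref{lem:samegrowth} is available. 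Set $\ell:=\inf_{D_{\rho}}v$. It now suffices to find $\gamma=\gamma(n,\nu,S,h)>0$ with $\inf_{D^{0}}v\ge(2\rho)^{\gamma}\ell$: since $2r/\rho=2/\rho\ge2$, any further multiplicative constant is absorbed by enlarging $\gamma$, which gives $\ell\le(2/\rho)^{\gamma}\inf_{D^{0}}v$, and rescaling recovers \eqref{eq:530}.

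\emph{The chain.} Fix a dilation ratio $\lambda\in(1,2]$ and let $N$ be the least integer with $\lambda^{N}\rho\ge\tfrac12$, so $N\asymp\log(1/\rho)$. Construct disks $B_{\rho_{j}}(c_{j})$ at time levels $\tau=t_{0}<t_{1}<\dots<t_{N}=\sigma$, with $\rho_{0}=\rho$, $\rho_{j+1}=\min(\lambda\rho_{j},\tfrac12)$, $c_{0}=z$, $c_{N}=0$, arranged so that for each $j$ the (possibly slanted) cylinder of spatial radius $2\rho_{j+1}$ over $[t_{j},t_{j+1}]$ joining $B_{\rho_{j}}(c_{j})\times\{t_{j}\}$ to $B_{\rho_{j+1}}(c_{j+1})\times\{t_{j+1}\}$ lies inside $Q_{1}$; because $\rho_{j+1}\le\tfrac12$ and the earlier (smaller) disks leave ample room, such a chain can be built, driving the centre to $0$ before the radius reaches $\tfrac12$. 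On each link, Lemma~\ref{lem:samegrowth} and the Remark following it (in slanted form) upgrade a bound $v\ge\ell_{j}$ on $B_{\rho_{j}}(c_{j})\times\{t_{j}\}$ to $v\ge c_{0}\ell_{j}$ on $B_{\rho_{j+1}}(c_{j+1})\times\{t_{j+1}\}$ with a loss $c_{0}\in(0,1)$ (imposing, if necessary, an artificial loss so that $c_{0}<1$). Iterating, $\inf_{D^{0}}v\ge c_{0}^{\,N}\ell\asymp(2\rho)^{\gamma_{0}}\ell$ with $\gamma_{0}=-\log_{\lambda}c_{0}>0$; absorbing the constant as above delivers $\gamma$.

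\emph{The role of $h$, and the main obstacle.} The one datum the hypotheses pin down is the total time spanned by the chain, $\sum_{j}(t_{j+1}-t_{j})=\sigma-\tau$, which can be as small as $h^{2}$ while the radii must still expand all the way to $\tfrac12$. This forces the links to be ``short'' --- the parameter $\alpha$ in Lemma~\ref{lem:samegrowth} is comparable to $h^{2}$, and large slant velocities are needed to drift the centre in so little time --- so the per--link loss $c_{0}$, and hence $\gamma$, pick up a dependence on $h$ (and on $S$, through the constants of Lemma~\ref{lem:samegrowth}); as $h\to0^{+}$ the window collapses and $\gamma\to\infty$. The genuine content of the proof is exactly this bookkeeping: exhibiting the sequence of (slanted) cylinders so that each one lies in $Q_{r}$, so that their time increments sum to precisely $\sigma-\tau$, and so that all constants remain uniform in $\rho$ and depend only on $n,\nu,S,h$ --- compare \cite{FS} and the arguments around Lemma~7.39 of \cite{GL2}. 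Once the chain is set up, the propagation along each link is a direct application of Lemma~\ref{lem:samegrowth}.
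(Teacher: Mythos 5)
Your proposal is correct in outline and rests on the same core idea as the paper: chain from $D_{\rho}$ to $D^{0}$ through $O(\log(r/\rho))$ cylinders of geometrically growing radii, lose a fixed factor on each link, and read off the power $(2r/\rho)^{\gamma}$ from the number of links. The differences are in how each link is handled. The paper first rescales time to reduce to $h=1$ (this is where the $h$-dependence of $\gamma$ enters, since the dilation alters $\nu$ and the Morrey norm of $b$), then sets up an explicit dyadic chain $y^{j}=y^{*}+2^{-j}(y-y^{*})$, $Q^{j}=Q_{2^{-j}}(Y^{j})$ with collinear centres, so that the nesting $B^{k+1}\subset B_{\rho}(z)\subset B^{k}\subset\cdots\subset B^{0}$ and the containment of each $Q^{j}$ in $Q_{1}(Y)$ are automatic; the per-link step is then a single application of the Second Growth Theorem to $u=1-v/m$ with fixed ratio $\rho/r=1/2$, giving the loss $(1-\beta_{3})^{-1}=2^{\gamma}$ with no further bookkeeping. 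You instead work one level lower, applying Lemma \ref{lem:samegrowth} and its slanted variant directly on each link, which forces you to redo the time-allocation and slant-velocity estimates that the Slanted Cylinder Lemma and the Second Growth Theorem already encapsulate. That is legitimate, and your analysis of why $\gamma$ degenerates as $h\to0^{+}$ is accurate, but you explicitly defer the construction of the chain ("such a chain can be built", "the genuine content of the proof is exactly this bookkeeping") rather than exhibiting it; since the containment of the slanted cylinders in $Q_{r}$, the summability of the time increments to exactly $\sigma-\tau$, and the uniformity of the per-link constants are precisely the nontrivial part of your route, a complete write-up would need to supply them --- or, more economically, follow the paper in quoting the Second Growth Theorem, whose hypotheses were designed so that the dyadic chain verifies them trivially.
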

\begin{proof}
Without loss of generality, we may assume $m:=\inf_{D_{\rho}}v>0$, $r=1$,
$z=0$, $\tau=0$, $\sigma=s=h^{2}$. So $D_{\rho}=B_{\rho}(0)\times\{0\}$.
We can apply an additional linear transformation along $t$-axis,
we can also reduce the proof to the case $h=1$. Now fix the integer
$k$ such that $2^{-k-1}<\rho\leq2^{-k}$, and for $j=0,1,\ldots$,
and define $y^{j}:=y^{*}+2^{-j}(y-y^{*})$, $B^{j}:=B_{2^{-j}}(y^{j})$,
where $y^{*}:=\frac{\rho}{1-\rho}y$, $Y^{j}:=\left(y^{j},4^{-j}\right)$,
$Q^{j}:=Q_{2^{-j}}(Y^{j})$, $D^{j}:=B_{2^{-j-1}}(y^{j})\times\{4^{-j}\}$.
By construction, $0=y^{*}+\rho(y-y^{*})$, so that
\[
B_{\rho}(0),\, B^{j}\in\{B_{\theta}\left(y^{*}+\theta(y-y^{*})\right);\,0\leq\theta\leq1\}.
\]
Then by the assumption, $B_{\rho}(0)\subset B_{1}(y)$ it follows
$\left|y\right|\leq1-\rho$, $\left|y-y^{*}\right|\leq1$, and
\[
B^{k+1}\subset B_{\rho}(0)\subset B^{k}\subset B^{k-1}\subset\ldots\subset B^{1}\subset B^{0}=B_{1}(y).
\]
Now apply Theorem \ref{thm:(Second-Growth-Theorem).} to the function
$u=1-\frac{1}{m}v$ in $Q^{k}$ with
\[
r=2^{-k},\,\rho=2^{-k-1},\, Y=Y^{k},\, z=0,\,\tau=0.
\]
Then we conclude that
\[
\sup_{D_{k}}u\leq\sup_{Q_{2^{-k-1}(Y^{k})}}u\leq\beta_{3}\sup_{Q^{k}}u\leq\beta_{3}=\beta_{2}(n,\nu,S,\frac{1}{2})<1,
\]
which is equivalent to
\[
\inf_{D_{\rho}}v=m\leq(1-\beta_{3})^{-1}\inf_{D^{k}}v=2^{\gamma}\inf_{D^{k}}v,
\]
where $\gamma:=-\log_{2}(1-\beta_{3})>0$. Similarly, if $k\geq1$,
we also have
\[
\inf_{D^{j}}v\leq2^{\gamma}\inf_{D^{j-1}}v,
\]
for $j=1,2,\ldots,k$. Finally we have
\[
\inf_{D_{\rho}}v\leq2^{\gamma}\inf_{D^{k}}v\leq2^{2\gamma}\inf_{D^{k-1}}v\leq\ldots\leq2^{(k+1)\gamma}\inf_{D^{0}}v\leq\left(\frac{2r}{\rho}\right)^{\gamma}\inf_{D^{0}}v.
\]

\end{proof}

\section{Interior Harnack Inequality}

We also need the third growth theorem in order to establish the interior
Harnack inequality. The first growth theorem tells us if $\mu\rightarrow0^{+}$ then $\beta_{1}\rightarrow0^{+}$.
The third growth theorem tells us if we have a nice control of the
measure of the set $\left\{ u>0\right\} $ near the bottom, then we
can have a more precise estimate. In other words, if we have the similar measure
condition for
\[
Q^{0}:=Q_{\frac{r}{2}}(Y^{0}), Y^{0}=\left(y,s-\frac{3}{4}r^{2}\right).
\]
Then if $\mu<1$, then $\beta_{1}<1$. The proof of it is long and technical but independent
of the specific structure of the equations. One can find a detailed proof
in, for example, \cite{FS,KS,GC}. We just formulate the results
here.
\begin{thm}[Third Growth Theorem]
\label{thm:(Third-Growth-Theorem).} Let a
function $u\in C^{2,1}\left(\overline{Q_{r}}\right)$, where $Q_{r}=Q_{r}(Y)$,
$Y=(y,s)\in\mathbb{R}^{n+1}$, $r>0$, and let $-u_{t}+Lu\geq0$ in
$Q_{r}$. In addition, we assume
\begin{equation}
\left|\left\{ u>0\right\} \cap Q^{0}\right|\leq\mu\left|Q^{0}\right|,\label{eq:71}
\end{equation}
where
\begin{equation}
Q^{0}:=Q_{\frac{r}{2}}(Y^{0}),\,\, Y^{0}=\left(y,s-\frac{3}{4}r^{2}\right)\label{eq:72}
\end{equation}
and $\mu<1$ is a constant. Then we have
\begin{equation}
\mathcal{M}_{\frac{r}{2}}(Y)\leq\beta\mathcal{M}_{r}(Y)\label{eq:73}
\end{equation}
with a constant
\[
\beta:=\beta(n,\nu,S,\mu)<1.
\]
\end{thm}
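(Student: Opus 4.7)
The plan is to follow the standard Ferretti--Safonov strategy, which deduces the third growth theorem from the first growth theorem together with the propagation tools developed above. Set $M := \mathcal{M}_r(Y)$ and consider the non-negative supersolution $v := M - u$, which satisfies $-v_t + Lv \leq 0$ in $Q_r(Y)$. The hypothesis rewrites as $|\{v \geq M\} \cap Q^0| \geq (1 - \mu)|Q^0|$: the supersolution $v$ is ``large'' on a definite fraction of the lower cylinder $Q^0$. The goal is a pointwise bound $v \geq c M$ on all of $Q_{r/2}(Y)$, from which the conclusion $\mathcal{M}_{r/2}(Y) \leq (1 - c)\mathcal{M}_r(Y)$ follows with $\beta := 1 - c < 1$.

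The main step is a measure-to-pointwise conversion, which is a Krylov--Safonov / Calder\'on--Zygmund style iteration driven by the first growth theorem, Theorem \ref{thm:(First-Growth-Theorem)}. Starting from the bound $|\{v \geq M\} \cap Q^0| \geq (1 - \mu)|Q^0|$, one decomposes $Q^0$ dyadically and, on sub-cylinders where the density of $\{u > 0\}$ drops below the threshold of Theorem \ref{thm:(First-Growth-Theorem)}, applies that theorem to force $u$ to be strictly below $M$ by a definite factor; iterating along a chain of shrinking sub-cylinders (and covering $Q^0$ by maximal ``bad'' sub-cubes in the Calder\'on--Zygmund sense), one obtains that on some whole parabolic disk $D_\rho := B_\rho(z) \times \{\tau\}$ with $\tau \in [s - r^2, s - 3r^2/4]$, $B_\rho(z) \subset B_r(y)$, and $\rho/r$ depending only on $n,\nu,S,\mu$, one has the pointwise lower bound $v \geq c_1 M$ with $c_1 = c_1(n,\nu,S,\mu) > 0$.

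Once such a disk $D_\rho$ is produced, the final step is immediate via Lemma \ref{lem:quotient}. For each $\sigma \in (s - r^2/4, s)$ one has $\sigma - \tau \geq r^2/2$, so Lemma \ref{lem:quotient} applies with $h := 1/\sqrt{2}$ to the lower disk $D_\rho$ and the upper disk $D^0 := B_{r/2}(y) \times \{\sigma\}$, yielding
\[
\inf_{D^0} v \;\geq\; \bigl(\tfrac{\rho}{2r}\bigr)^{\gamma} \inf_{D_\rho} v \;\geq\; c_2 M,
\]
where $c_2 = c_2(n,\nu,S,\mu) > 0$. Varying $\sigma$ over $(s - r^2/4, s)$ and recalling that $Q_{r/2}(Y) = B_{r/2}(y) \times (s - r^2/4, s)$, we get $v \geq c_2 M$ on $Q_{r/2}(Y)$, hence $\mathcal{M}_{r/2}(Y) \leq (1 - c_2)\mathcal{M}_r(Y)$.

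The main obstacle is the stacking step: converting positive measure into a full pointwise lower bound on an entire disk. This is the technical core of the Krylov--Safonov machinery and is long and delicate as noted by the author. Crucially, however, this step is purely geometric/measure-theoretic once the first growth theorem is available; since Theorem \ref{thm:(First-Growth-Theorem)} has already been established in the critical-drift setting via the variant ABPKT estimate of Theorem \ref{variant}, the stacking argument from \cite{FS,KS,GC} carries over verbatim, with all constants depending only on $n,\nu,S,\mu$.
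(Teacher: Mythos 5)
The paper does not actually prove this theorem: it remarks that the proof is ``long and technical but independent of the specific structure of the equations'' and refers the reader to \cite{FS,KS,GC}. Your proposal is consistent with that treatment and with the standard route: the reduction to the supersolution $v=M-u$, the restatement of the measure hypothesis as $|\{v\geq M\}\cap Q^{0}|\geq(1-\mu)|Q^{0}|$, and the final propagation from a bottom disk to all of $Q_{\frac{r}{2}}(Y)$ via Lemma \ref{lem:quotient} with $h=1/\sqrt{2}$ are all correct, and you rightly identify that the only equation-specific inputs are Theorem \ref{thm:(First-Growth-Theorem)} (already established here via the variant ABPKT estimate) and Lemma \ref{lem:quotient}. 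The one step you leave unproved --- the Krylov--Safonov covering/stacking argument converting the measure bound into a pointwise bound --- is precisely the step the paper also delegates to the references, so there is no gap relative to the paper; note only that the cited sources typically organize that step by iterating the first growth theorem on the level sets $\{u>\beta_{1}^{k}M\}$ via a parabolic covering lemma rather than by first producing a single good disk, but both organizations rest on the same structure-independent covering lemma.
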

\begin{cor}
\label{lower}Let a function $v\in C^{2,1}\left(\overline{Q_{r}}\right)$
be such that $v\geq0$, $-v_{t}+Lv\leq0$ in $Q_{r}$, and
\begin{equation}
\left|\left\{ v\geq1\right\} \cap Q^{0}\right|>(1-\mu)\left|Q^{0}\right|.\label{eq:74}
\end{equation}
Then
\begin{equation}
v\geq1-\beta>0\label{eq:75}
\end{equation}
on $Q_{\frac{r}{2}}$ where $\beta=\beta(n,\nu,\mu,S)<1$ for $\mu<1$.
\end{cor}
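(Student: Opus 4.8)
The plan is to obtain this as an immediate consequence of the Third Growth Theorem, Theorem~\ref{thm:(Third-Growth-Theorem).}, applied to the function $u := 1 - v$.

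First I would verify that $u$ is an admissible subsolution on $Q_r$. Since $L$ annihilates constants, $L1 = \sum_{ij} a_{ij} D_{ij}1 + \sum_i b_i D_i 1 = 0$, so
\[
-u_t + Lu = v_t - Lv = -(-v_t + Lv) \geq 0 \quad\text{in } Q_r ,
\]
and clearly $u \in C^{2,1}(\overline{Q_r})$. Moreover $v \geq 0$ gives $u \leq 1$, hence $u_+ \leq 1$, and in particular $\mathcal{M}_r(Y) = \max_{Q_r(Y)} u_+ \leq 1$.

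Next I would rewrite the measure hypothesis \eqref{eq:74} in the form required by Theorem~\ref{thm:(Third-Growth-Theorem).}. Since $\{v \geq 1\} = \{u \leq 0\}$, we have $\{u > 0\} \cap Q^0 = \{v < 1\} \cap Q^0$, so
\[
|\{u > 0\} \cap Q^0| = |Q^0| - |\{v \geq 1\} \cap Q^0| < \mu\,|Q^0| ,
\]
which is exactly condition \eqref{eq:71}, with the same $\mu \in (0,1)$ and the same $Q^0 = Q_{r/2}(Y^0)$, $Y^0 = (y, s - \tfrac{3}{4}r^2)$. Applying Theorem~\ref{thm:(Third-Growth-Theorem).} then yields $\mathcal{M}_{r/2}(Y) \leq \beta\,\mathcal{M}_r(Y)$ with $\beta = \beta(n,\nu,S,\mu) < 1$; combined with $\mathcal{M}_r(Y) \leq 1$ this gives $\mathcal{M}_{r/2}(Y) \leq \beta$. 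Unwinding the definition of $\mathcal{M}$, $\max_{Q_{r/2}(Y)}(1-v)_+ \leq \beta$, hence $1 - v \leq \beta$ on $Q_{r/2}(Y)$, i.e. $v \geq 1 - \beta > 0$ there, which is \eqref{eq:75}.

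I do not expect a genuine obstacle here: the substantive content is already packaged in the Third Growth Theorem, and the only steps requiring (routine) care are the check that subtracting $v$ from the constant $1$ preserves the parabolic inequality and the passage to the complementary measure condition. This lower-bound reformulation is precisely the shape in which the growth theorems will feed into the proof of the interior Harnack inequality, Theorem~\ref{harnack}.
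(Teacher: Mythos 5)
Your proof is correct and is exactly the intended derivation: the paper states this as an immediate corollary of the Third Growth Theorem without writing out the argument, and applying that theorem to $u=1-v$ with the complementary measure condition is the standard route. All the routine checks (the sign of $-u_t+Lu$, the bound $\mathcal{M}_r(Y)\le 1$ from $v\ge 0$, and the strict inequality $|\{u>0\}\cap Q^0|<\mu|Q^0|$) are handled correctly.
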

With the above preparation, we now are ready to establish the interior
Harnack inequality.
\begin{thm*}[Interior Harnack Inequality]
 Suppose $u\in C^{2,1}(Q_{2r}(Y)) \cap C(\overline{Q_{2r}(Y)})$ and $-u_{t}+Lu=0$ in $Q_{2r}(Y)$,
$Y=(y,s)\in\mathbb{R}^{n+1}$ and $r>0$. If $u\geq0$, then
\begin{equation}
\sup_{Q^{0}}u\leq N\inf_{Q_{r}}u,\label{eq:76}
\end{equation}
where $N=N(n,\nu,S)$ and $Q^{0}=B_{r}(y)\times(s-3r^{2},s-2r^{2})$.
\end{thm*}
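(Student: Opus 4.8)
The plan is to derive the inequality from the three growth theorems by the classical two--step route --- a local boundedness estimate for the subsolution, a weak Harnack estimate for the supersolution, and then a gluing --- following \cite{FS}. \textbf{Reductions.} Every quantity involved, in particular $S(\Omega)$, is invariant under the parabolic scaling together with translation, so I may assume $r=1$ and $Y=(0,0)$; thus $-u_{t}+Lu=0$ in $Q_{2}:=B_{2}(0)\times(-4,0)$ and the goal is $\sup_{Q^{0}}u\le N\inf_{Q_{1}}u$ with $Q^{0}=B_{1}(0)\times(-3,-2)$ and $Q_{1}=B_{1}(0)\times(-1,0)$. Replacing $u$ by $u+\delta$ (still a solution, since $L$ annihilates constants) and letting $\delta\downarrow0$ at the end, I may assume $u>0$; multiplying $u$ by a positive constant, I may assume $\inf_{Q_{1}}u=1$, so that it suffices to prove the one--sided bound $\sup_{Q^{0}}u\le N=N(n,\nu,S)$.

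\textbf{Local boundedness.} Fix a small exponent $\varepsilon>0$ (to be chosen in the next step) and an auxiliary cylinder, for instance $\widetilde{Q}:=B_{3/2}(0)\times(-7/2,-7/4)$, which contains $Q^{0}$, lies compactly inside $Q_{2}$, and sits strictly below $Q_{1}$ in time. For every vertex $Z\in\overline{Q^{0}}$ the cylinder $Q_{1/4}(Z)$ is contained in $\widetilde{Q}$, so Theorem \ref{thm:Lepsilon} applied at $Z$ with exponent $\varepsilon$ gives $u(Z)^{\varepsilon}\le N\,|Q_{1/4}|^{-1}\int_{Q_{1/4}(Z)}u^{\varepsilon}\le N\,|\widetilde{Q}|^{-1}\int_{\widetilde{Q}}u^{\varepsilon}$ (the dimensional factor $|\widetilde{Q}|/|Q_{1/4}|$ absorbed into $N$); taking the supremum over $Z$ yields
\[
\sup_{Q^{0}}u\ \le\ N\Bigl(\,|\widetilde{Q}|^{-1}\int_{\widetilde{Q}}u^{\varepsilon}\Bigr)^{1/\varepsilon},\qquad N=N(n,\nu,S,\varepsilon).
\]

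\textbf{Weak Harnack and conclusion.} It remains to produce $\varepsilon=\varepsilon(n,\nu,S)>0$ and $N=N(n,\nu,S)$ with $|\widetilde{Q}|^{-1}\int_{\widetilde{Q}}u^{\varepsilon}\le N$, using $\inf_{Q_{1}}u=1$. Since $|\widetilde{Q}|^{-1}\int_{\widetilde{Q}}u^{\varepsilon}=\varepsilon|\widetilde{Q}|^{-1}\int_{0}^{\infty}t^{\varepsilon-1}|\{u>t\}\cap\widetilde{Q}|\,dt$, it suffices to prove a power decay $|\{u>t\}\cap\widetilde{Q}|\le N\,t^{-\kappa}|\widetilde{Q}|$ for all $t\ge1$ with $\kappa=\kappa(n,\nu,S)>0$, and then take $0<\varepsilon<\kappa$. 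I would obtain this decay by a Krylov--Safonov--type measure--theoretic covering (``growing ink--spots'') argument whose single--scale input is Corollary \ref{lower}: applied to the supersolution $v=u/t$, it says that if $\{u\ge t\}$ fills all but a fixed small fraction of the lower sub--cylinder of a small parabolic cylinder $Q'\subset\widetilde{Q}$, then $u\ge(1-\beta)t$ on the upper portion of $Q'$; Lemma \ref{lem:quotient} then transports this interior lower bound forward in time, along a bounded chain of cylinders inside $Q_{2}$, up to the slab carrying $Q_{1}$, which gives $u\ge c\,t$ on $Q_{1}$ with $c=c(n,\nu,S)>0$ and hence, via $\inf_{Q_{1}}u=1$, forces $t\le c^{-1}$. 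Iterating this implication across dyadic scales (comparing the level $t$ with $\lambda^{-1}t$ for a fixed $\lambda>1$) upgrades it to the stated power decay. Combining this with the local boundedness estimate and undoing the normalizations gives $\sup_{Q^{0}}u\le N\inf_{Q_{1}}u$ with $N=N(n,\nu,S)$, and rescaling restores general $r$ and $Y$.

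\textbf{Main obstacle.} The delicate step is the weak Harnack estimate. Corollary \ref{lower} and Lemma \ref{lem:quotient} are ``all--or--nothing'' single--scale statements, and converting them into the quantitative $L^{\varepsilon}$ bound requires the covering lemma together with careful control of how the constant degrades along the forward--in--time chain of cylinders joining an arbitrarily small cylinder of $\widetilde{Q}$ to $Q_{1}$: one must verify that the number of links, and hence the compounded polynomial loss coming from Lemma \ref{lem:quotient}, is governed by the fixed geometry --- therefore by $n$, $\nu$, $S$ alone --- and not by the scale of the initial cylinder. The local boundedness step and the scaling reductions are routine.
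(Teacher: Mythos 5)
Your reduction and your local boundedness step are fine: Theorem \ref{thm:Lepsilon} is exactly the $L^{\varepsilon}$-to-sup estimate you need, and applying it at each point of $\overline{Q^{0}}$ with a fixed small cylinder does give $\sup_{Q^{0}}u\le N\bigl(|\widetilde{Q}|^{-1}\int_{\widetilde{Q}}u^{\varepsilon}\bigr)^{1/\varepsilon}$. The gap is in the second half. The power decay $|\{u>t\}\cap\widetilde{Q}|\le Nt^{-\kappa}|\widetilde{Q}|$ is the weak Harnack inequality, which is of essentially the same depth as the Harnack inequality itself; you state that you ``would obtain'' it from a growing ink-spots covering argument, but that machinery (a parabolic stacked-covering lemma, propagation of level sets forward in time with controlled loss, iteration over dyadic levels) is nowhere developed in this paper and does not follow from Corollary \ref{lower} and Lemma \ref{lem:quotient} alone without substantial additional work. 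Your own ``main obstacle'' paragraph correctly identifies the missing control --- the compounding of the polynomial loss from Lemma \ref{lem:quotient} along chains whose length depends a priori on the scale of the cylinder being propagated --- but does not resolve it. As written, the central quantitative step is assumed rather than proved.

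The paper's proof is organized precisely to avoid this. Instead of a covering argument, one maximizes the weighted function $d^{\gamma}u$ over the intermediate region $Q^{1}=B_{2}(0)\times(-3,-2)$, where $d(X)=\sup\{\rho>0:\,Q_{\rho}(X)\subset Q_{2}(0)\}$ and $\gamma$ is chosen equal to the exponent appearing in Lemma \ref{lem:quotient}. At the maximum point $X_{0}$, the first-growth-theorem argument preceding Theorem \ref{thm:Lepsilon} shows that $\{u>\tfrac12 u(X_{0})\}$ fills a fixed fraction $\mu_{1}$ of $Q_{\rho}(X_{0})$ with $\rho=\tfrac14 d(X_{0})$; Corollary \ref{lower} then gives $u\ge\beta u(X_{0})$ on a cylinder just above $X_{0}$, and Lemma \ref{lem:quotient} transports this bound to $\inf_{Q_{1}}u$ at the cost of a factor $(4/\rho)^{\gamma}$. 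Since $\sup_{Q^{0}}u\le d^{\gamma}u(X_{0})=(4\rho)^{\gamma}u(X_{0})$, the two powers of $\rho$ cancel and one obtains $\sup_{Q^{0}}u\le\beta^{-1}4^{2\gamma}\inf_{Q_{1}}u$ in a single step, with no covering and no level-set iteration. To complete your route you would have to import and prove the ink-spots/weak-Harnack apparatus in this parabolic setting with $b\in M_{n+1,n+1}^{1/(n+1)}$; otherwise, replace your weak Harnack step by this single-maximum-point argument.
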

We will build a non-degenerate intermediate region to get a quantitative
relation between two regions we are interested in with the help of
three growth theorems.
\begin{proof}
After rescaling and translating as necessary,
we can assume $Y=0$ and $r=1$. Now $Q_{1}=B_{1}(0)\times(-1,0)$,
$Q^{0}=B_{1}(0)\times(-3,-2)$. It is easy to see that if we define
$d(X):=\sup\left\{ \rho>0:\, Q_{\rho}(X)\subset Q_{2}(0)\right\} $,
then $d(X)\geq1$ in $Q^{0}$. Hence, if we consider $Q^{1}:=B_{2}(0)\times(-3,-2)$
we conclude that
\begin{equation}
\sup_{Q^{0}}u\leq M:=\sup_{Q^{1}}d^{\gamma}u,\label{eq:77}
\end{equation}
where $\gamma$ is chosen at the same as the $\gamma$ in Lemma \ref{lem:quotient}
with $h=\frac{1}{2}$. From the discussion before Theorem \ref{thm:Lepsilon},
we can find $\exists X_{0}\in\overline{Q^{1}}\backslash\left[\partial_{p}Q^{1}\cap\partial_{p}Q_{2}\right]$
such that
\begin{equation}
d^{\gamma}u(X_{0})=M.\label{eq:78}
\end{equation}
Similarly as above, we define
\begin{equation}
\rho=\frac{1}{4}d(X_{0})\in(0,\frac{1}{2}],\label{eq:79}
\end{equation}
and
\begin{equation}
Q_{0}=Q_{\rho}(X_{0})\cap\left\{ u>\frac{1}{2}u(X_{0})\right\} .\label{eq:710}
\end{equation}
By the above discussion, we obtain
\[
\left|Q_{0}\right|>\mu_{1}\left|Q_{\rho}(X_{0})\right|
\]
for some constant $\mu_{1}=\mu_{1}(n,\nu,S,\gamma)>0$. Now we apply
the Corollary \ref{lower} with
\[
v=\frac{2}{u(X_{0})}u,\,\, Q_{r}=Q_{2\rho}(Y_{0}),\,\, Y_{0}=(x_{0},t_{0}+3\rho^{2}),\,\, Q^{0}=Q_{\rho}(X_{0}),\,\,1-\mu=\mu_{1}.
\]
Then we have
\begin{equation}
u\geq\beta u(X_{0})\label{eq:711}
\end{equation}
on $Q_{\rho}(Y_{0})$ with $\beta=\beta(n,\nu,S)>0$. Next we apply
Lemma \ref{lem:quotient} with
\[
v=u,\,\, r=2,\,\, D_{\rho}=B_{\rho}(x_{0})\times\{t_{0}+2\rho^{2}\}\subset\overline{Q_{\rho}(Y_{0})},
\]
and
\[
D^{0}=B_{1}(0)\times\{\tau\},\,\,\forall\tau\in(-1,0).
\]
So we have
\begin{equation}
\beta u(X_{0})\leq\inf_{D_{\rho}}u\leq\left(\frac{4}{\rho}\right)^{\gamma}\inf_{Q_{1}(0)}u.\label{eq:712}
\end{equation}
Finally, with the help of the intermediate region, we conclude that
\begin{equation}
\sup_{Q^{0}}u\leq M=d^{\gamma}u(X_{0})=\left(4\rho\right)^{\gamma}u(X_{0})\leq\beta^{-1}4^{2\gamma}\inf_{Q_{1}(0)}u.\label{eq:713}
\end{equation}
Taking $N=N(n,\nu,S)=\beta^{-1}4^{2\gamma}$ gives the desired result.
\end{proof}
It is well-known that it is easy to derive the H\"older continuity
of solutions from the Harnack inequality by standard oscillation and
iteration arguments.
\begin{thm}\label{hoelder}
Suppose $u\in W_{p}$ where $p<n+1$ is from Theorem \ref{variant}, and $u$ is a solution of $-u_{t}+Lu=0$ in $Q_{r}$. Then u is H\"older
continuous in $Q_{\frac{r}{2}}$.
\end{thm}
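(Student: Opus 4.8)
The plan is the classical derivation of interior Hölder continuity from a parabolic Harnack inequality, via geometric decay of oscillation. Throughout write $\mathrm{osc}_{E}\,u:=\sup_{E}u-\inf_{E}u$, and, since every quantity in sight is scaling invariant, one may as well take $r=1$.

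\emph{Step 1 (one-scale oscillation decay).} Fix $Z=(z,s')$ and $\rho>0$ with $\overline{Q_{2\rho}(Z)}\subset Q_{r}$, and put $M:=\sup_{Q_{2\rho}(Z)}u$, $m:=\inf_{Q_{2\rho}(Z)}u$. Since $L$ annihilates constants, both $M-u\geq0$ and $u-m\geq0$ solve $-w_{t}+Lw=0$ in $Q_{2\rho}(Z)$, and they lie in $C(\overline{Q_{2\rho}(Z)})\cap W_{p}(Q_{2\rho}(Z))$ because $\overline{Q_{2\rho}(Z)}\subset Q_{r}$ and $u\in W_{p}(Q_{r})\subset C(Q_{r})$. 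Applying Theorem \ref{harnack} to each of these functions (with its ``$r$'' equal to my $\rho$ and its ``$Y$'' equal to my $Z$) and adding the two inequalities: the left side is bounded below by $M-m=\mathrm{osc}_{Q_{2\rho}(Z)}u$, while the right side equals $N\big(\mathrm{osc}_{Q_{2\rho}(Z)}u-\mathrm{osc}_{Q_{\rho}(Z)}u\big)$. Rearranging gives
\[
\mathrm{osc}_{Q_{\rho}(Z)}u\;\leq\;\theta\,\mathrm{osc}_{Q_{2\rho}(Z)}u,\qquad \theta:=1-\tfrac{1}{N}\in(0,1),
\]
with $\theta=\theta(n,\nu,S)$.

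\emph{Step 2 (iteration).} For $X_{0}\in Q_{r/2}(Y)$ one checks $\overline{Q_{\rho_{0}}(X_{0})}\subset Q_{r}(Y)$ for $\rho_{0}:=r/8$. Iterating Step 1 along the dyadic radii $2^{-k}\rho_{0}$ yields $\mathrm{osc}_{Q_{2^{-k}\rho_{0}}(X_{0})}u\leq\theta^{k}\,\mathrm{osc}_{Q_{r}(Y)}u$, and comparing an arbitrary $\rho\in(0,\rho_{0}]$ with the dyadic scale gives
\[
\mathrm{osc}_{Q_{\rho}(X_{0})}u\;\leq\;C\Big(\frac{\rho}{r}\Big)^{\alpha}\,\mathrm{osc}_{Q_{r}(Y)}u,\qquad \alpha:=-\log_{2}\theta>0,
\]
uniformly in $X_{0}\in Q_{r/2}(Y)$ and $0<\rho\leq r/8$, with $C=C(n,\nu,S)$.

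\emph{Step 3 (from pointwise modulus to a Hölder bound on $Q_{r/2}(Y)$).} Given $X_{1}=(x_{1},t_{1})$, $X_{2}=(x_{2},t_{2})$ in $Q_{r/2}(Y)$ with $t_{1}\leq t_{2}$, let $d:=\max(|x_{1}-x_{2}|,|t_{1}-t_{2}|^{1/2})$ be their parabolic distance. If $d\geq r/32$ the bound is trivial from $|u(X_{1})-u(X_{2})|\leq\mathrm{osc}_{Q_{r/2}(Y)}u$. If $d<r/32$, then $X_{1}\in\overline{Q_{2d}(X_{2})}$ and $\overline{Q_{2d}(X_{2})}\subset Q_{r}(Y)$ with $2d\leq r/8$, so by continuity of $u$ together with Step 2, $|u(X_{1})-u(X_{2})|\leq\mathrm{osc}_{\overline{Q_{2d}(X_{2})}}u\leq C'(d/r)^{\alpha}\mathrm{osc}_{Q_{r}(Y)}u$. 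This is the asserted Hölder continuity in $Q_{r/2}$ (exponent $\alpha$ in the parabolic metric, i.e.\ $\alpha$ in $x$ and $\alpha/2$ in $t$). I do not anticipate a deep obstacle: the whole argument is routine once Theorem \ref{harnack} is in hand. The one point that genuinely requires care is the backward-in-time, ``waiting-time'' geometry of the parabolic Harnack inequality — the supremum is over the earlier cylinder $Q^{0}$ and the infimum over $Q_{r}$ — which must be tracked correctly when combining the estimates for $M-u$ and $u-m$, and when choosing the auxiliary cylinder in Step 3 (its apex must be placed at the later of the two times $t_{1},t_{2}$, and one must verify it still lies inside $Q_{r}$).
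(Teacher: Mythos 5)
Your argument is correct and is precisely the standard oscillation-decay-plus-iteration derivation of H\"older continuity from the interior Harnack inequality that the paper invokes without writing out (it only remarks that the result follows ``by standard oscillation and iteration arguments''). The details you supply --- applying Theorem \ref{harnack} to $M-u$ and $u-m$, summing to get $\mathrm{osc}_{Q_{\rho}}u\leq(1-1/N)\,\mathrm{osc}_{Q_{2\rho}}u$, and tracking the backward-in-time cylinder geometry --- are all sound and consistent with the intended proof.
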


\section{Approximation}

In all the proofs from above sections, we always assume $u$ is $C^{2,1}$ in stead of $W_{p}$ where $p<n+1$ is from Theorem \ref{variant}.
In this section, we briefly show we can use an approximation argument
to show that all results hold for $u\in W_{p}(Q_{2r})$ in the sense of Definition \ref{space} and $p$ is from the variant Aleksandrov-Bakelman-Pucci-Krylov-Tso estimate \eqref{abpkt}.
Throughout, we assume
\begin{equation}
u\geq0,\,\,-u_{t}+Lu=-u_{t}+\sum_{ij}a_{ij}D_{ij}u+\sum_{i}b_{i}D_{i}u=0\label{eq:81}
\end{equation}
in $Q_{2r}$. We can approximate $a_{ij}$, $b_{i}$ and $u$ by smooth
functions $a_{ij}^{\epsilon}\rightarrow a_{ij}$, $b_{i}^{\epsilon}\rightarrow b_{i}$
a.e. as $\epsilon\rightarrow0^{+}$. And $u^{\epsilon}\rightarrow u$
in $W_{p}^{2,1}$ as $\epsilon\rightarrow0^{+}$. Then
\begin{equation}
f^{\epsilon}=-u_{t}^{\epsilon}+L^{\epsilon}u^{\epsilon}=-u_{t}^{\epsilon}+\sum_{ij}a_{ij}^{\epsilon}D_{ij}u^{\epsilon}+\sum_{i}b_{i}^{\epsilon}D_{i}u^{\epsilon}\rightarrow0\label{eq:82}
\end{equation}
in $L_{loc}^{p}(Q_{2r})$ as $\epsilon\rightarrow0^{+}$. We know
the existence of solutions to equations with smooth coefficients, therefore we can
write
\[
u^{\epsilon}=v^{\epsilon}+w^{\epsilon},
\]
where
\[
-v_{t}^{\epsilon}+L^{\epsilon}v^{\epsilon}=0
\]
 in $Q_{2r}$ and
\[
v^{\epsilon}=u^{\epsilon}
\]
 on $\partial_{p}Q_{2r}$:
\[
-w_{t}^{\epsilon}+L^{\epsilon}w^{\epsilon}=f^{\epsilon},
\]
\[
w^{\epsilon}=0
\]
 on $\partial_{p}Q_{2r}$. By the variant Aleksandrov-Bakelman-Pucci-Krylov-Tso
estimate \eqref{abpkt}, we know $w^{\epsilon}\rightarrow0$ in $L^{\infty}$ and
$v^{\epsilon}$ satisfies the Harnack inequality. Finally, by
an easy limiting argument, $u$ also satisfies the Harnack inequality.
\section{Appendix}
As we mentioned in the introduction, when drift $b\in L_{x,t}^{n+1}$, we do not expect the solution to have H\"older continuity since $ L_{x,t}^{n+1}$ is supercritical with respect to the parabolic scaling. In this appendix, we present a concrete example.
We consider the parabolic equation in $1+1$ dimensions
with drift $b\in L_{x,t}^{2}(\mathbb{R}^{2})$, 
\begin{equation}
u_{t}+b\nabla u-\Delta u=0.\label{eq:81}
\end{equation}
We define for $t\in[0,1]$ 
\begin{equation}
b(x,t)=a(t)\begin{cases}
1 & -r(t)\leq x<0\\
-1 & 0<x\leq r(t)\\
0 & x\notin[-r(t),\,0)\cup(0,\, r(t)]
\end{cases}\label{eq:82}
\end{equation}
and if $t\notin[0,1]$, $b=0$. We set $a(t)=(1-t)^{-\beta}$ and
$r(t)=(1-t)^{\alpha}$ where $\beta$ and $\alpha$ to be determined
later. First of all, by the integrability condition of $b$, we see
$\int_{0}^{1}(1-t)^{\alpha-2\beta}<\infty$, we get $\alpha-2\beta>-1$.

We try to construct an odd function $\phi$ so that we can do a  comparison argument.
For $0\leq x\leq1$, we define $\ensuremath{\phi(x)=\sin(\pi x/2)}$
and $\phi=1$ for $x>1$. Notice that for $x\in[0,1)$, we have $-\Delta\phi\leq C\phi$
for some constant $C$. In particular, based on our specific choice,
we take the constant $C=\left(\frac{2}{\pi}\right)^{2}$ . Finally,
we extend this $\phi$ oddly to the whole line.  We consider 
\begin{equation}
v(x,t)=\exp\left[-C\int_{0}^{t}(1-s)^{-2\alpha}\right]\phi\left(x/r(t)\right)\label{eq:83}
\end{equation}
which requires $-2\alpha>-1$. We try to verify on $\left(0,\, r(t)\right)$,
$v$ is a subsolution for $t\in[0,1]$ 
\begin{equation}
v_{t}=\exp\left[-C\int_{0}^{t}(1-s)^{-2\alpha}\right]\left(x\alpha(1-t)^{-a-1}\phi'-C(1-t)^{-2\alpha}\phi\right)\label{eq:84}
\end{equation}

\begin{equation}
b\nabla v=-(1-t)^{-\beta}(1-t)^{-\alpha}\phi'\exp\left[-C\int_{0}^{t}(1-s)^{-2\alpha}\right]\label{eq:85}
\end{equation}

\begin{equation}
-\Delta v=-(1-t)^{-2\alpha}\phi''\exp\left[-C\int_{0}^{t}(1-s)^{-2\alpha}\right]\label{eq:86}
\end{equation}
By construction, 
\begin{equation}
-(1-t)^{-2\alpha}\phi''\exp\left[-C\int_{0}^{t}(1-s)^{-2\alpha}\right]-C(1-t)^{-2\alpha}\phi\exp\left[-C\int_{0}^{t}(1-s)^{-2\alpha}\right]\leq0.\label{eq:87}
\end{equation}
We only need to verify that 
\begin{equation}
C\left(x\alpha(1-t)^{-a-1}\phi'\right)-(1-t)^{-\beta}(1-t)^{-\alpha}\phi'\exp\left[-C\int_{0}^{t}(1-s)^{-2\alpha}\right]\leq0\label{eq:88}
\end{equation}
for $x\in\left(0,r(t)\right)$ and $t\in[0,1)$. Since $\phi'$ is
nonnegative, it suffices to check 
\begin{equation}
x\alpha(1-t)^{-a-1}-(1-t)^{-\beta}(1-t)^{-\alpha}\leq0,\,\, x\in[0,r(t)),\,\, t\in[0,1).\label{eq:89}
\end{equation}
\begin{equation}
x\alpha(1-t)^{-a-1}\leq\alpha(1-t)^{-\alpha-1}(1-t)^{\alpha}=\alpha(1-t)^{-1}.\label{eq:810}
\end{equation}
We pick $-\beta-\alpha+1<0$. Also $-2\alpha>-1$ and $\alpha-2\beta>-1$.
So $\alpha\in(\frac{1}{3},\frac{1}{2})$, we can pick $\alpha=\frac{5}{12}$.
Then we pick $\beta=\frac{2}{3}$. This pair satisfies all of our
conditions. So with our choice of $\alpha$ and $\beta,$ we can see
our $v(x,t)$ is a subsolution to our equation in $[0,r(t))\times[0,1)$. 

By the symmetry of our equation and the oddness of $\phi$, $v$ is a
subsolution of our equation $[0,r(t))\times[0,1)$ and is a supersolution
on $(-r(t),0]\times[0,1)$. If we take $u$ to be the solution of our
equation with initial data $v(x,0)$. Then when $t$ approach $1$,
we look at the oscillation on the ball $B_{r(t)}(0)$. We have 
\begin{equation}
2\leq Osc_{B_{r(t)}(0)}v\leq Osc_{B_{r(t)}(0)}u.\label{eq:811}
\end{equation}
Since $r(t)\rightarrow0$ as $t\rightarrow1$, we conclude that $u$
will have a discontinuity at the origin when $t\rightarrow1$.

\section{Acknowledgment}
This work was initiated when I was still at the University of Minnesota Twin Cities. I would like to thank Professor Mikhail Safonov for suggesting this interesting problem to me, and for many motivating discussions. I also want to thank Professor Luis Silvestre for discussions on the supercritical scaling case and some related comments.

\end{document}